\newcommand{\D}{\mathbb{D}}
\newcommand{\C}{\mathbb{C}}
\newcommand{\N}{\mathbb{N}}
\newcommand{\T}{\mathbb{T}}
\newcommand{\A}{\mathcal{A}}
\newcommand{\p}{\mathbb{P}}
\newcommand{\E}{\mathbb{E}}
\newcommand{\rkhs}{\mathcal{H}}
\newcommand{\B}{\mathbb{B}}
\renewcommand{\MR}[1]{}
\title[Random Carleson sequences]{Random Carleson Sequences for the Hardy space on the Polydisc and the Unit Ball }
\author[N. Chalmoukis]{Nikolaos Chalmoukis \orcidlink{0000-0001-5210-8206}}
\address{Dipartimento di Matematica e Applicazioni, Universit\`a degli
Studi di Milano Bicocca, Via R. Cozzi 55,  20125, Milano, Italy}
\email{nikolaos.chalmoukis@unimib.it}
\author[A. Dayan]{Alberto Dayan \orcidlink{0000-0002-7346-4354}}
\address{Fachrichtung Mathematik, Universit\"at des Saarlandes, 66123 Saarbr\"ucken, Germany}
\email{dayan@math.uni-sb.de}
\author[G. Lamberti]{Giuseppe Lamberti \orcidlink{0009-0009-0503-0421}}
\address{Univ. Bordeaux, CNRS, Bordeaux INP, IMB, UMR 5251, F-33400 Talence, France} 
\email{giuseppe.lamberti@math.u-bordeaux.fr}
\thanks{N. Chalmoukis is a member of Indam--Gnampa and partially supported by the Hellenic Foundation for Research and Innovation
(H.F.R.I.) under the ``2nd Call for H.F.R.I. Research Projects to support Faculty Members \&
Researchers'' (Project Number: 4662) and the Indam--Gnampa project ``Function theory in several complex and quaternionic variables'' CUP\textunderscore E53C22001930001.}
\thanks{A. Dayan is partially supported by the Emmy Noether Program of the
German Research Foundation (DFG Grant 466012782)}
\subjclass[2020]{Primary: 46E22; Secondary: 30E05; 47B32; 66G25}
\keywords{Carleson sequences, Hardy spaces, Polydisc, Unit ball, random sequences}
\begin{document}

\begin{abstract} 
We study the Kolmogorov $0-1$ law for a random  sequence with prescribed radii so that it generates a Carleson measure almost surely, both for the Hardy space on the polydisc and the Hardy space on the unit ball, thus providing improved versions of previous results of the first two authors and of a separate result of Massaneda. In the polydisc, the geometry of such sequences is not well understood, so we proceed by studying the random Gramians generated by random sequences, using tools from the theory of random matrices.  Another result we prove, and that is of its own relevance, is the $0-1$ law for a random sequence to be partitioned into $M$ separated sequences with respect to the pseudo-hyperbolic distance, which is used also to describe the random sequences that are interpolating for the Bloch space on the unit disc almost surely.
\end{abstract}
\maketitle
\section{Introduction}

Let $\rkhs$ be a reproducing kernel Hilbert space of analytic functions on a domain $X$ of $\C^d$, $d\in\N$. A positive regular measure $\mu$ on $X$ is a Carleson measure for $\rkhs$ if $\rkhs$ embeds continuously inside $L^2(X, \mu)$, namely if there exists a constant $C_\mu$ such that
\begin{equation}
    \label{eqn:carlesonrkhs}\tag{CM}
\|f\|_{L^2(X, \mu)}\leq C_\mu~\|f\|_{\rkhs},\qquad \forall f\in\rkhs.
\end{equation}

Carleson measures have been studied in various settings, as they have important applications in harmonic analysis. In this note, we will consider measures generated by sequences, which are intimately connected to the study of interpolating sequences (see for example \cite{AM02, Aleman2019, Chalmoukis2024, Berndtsson1987}). Let $k$ be the reproducing kernel of $\rkhs$, and let $Z=(z_n)_n$ be a sequence in $X$. Define the measure $\mu_Z$ as
\[
\mu_Z:=\sum_{n \in \bN} \|k_{z_n}\|^{-2}~\delta_{z_n}.
\]
Thanks to the reproducing property of the kernels $(k_{z_n})_n$, condition \eqref{eqn:carlesonrkhs} then becomes
\[
\sum_{n\in\N}|\langle f, \hat{k}_n\rangle|^2\leq C_\mu~\|f\|^2_\rkhs, \qquad \forall f \in \cH,
\]
where $\hat{k}_n:=k_{z_n}/\|k_{z_n}\|$ is the normalized kernel at the point $z_n$. The last inequality is equivalent to the boundedness of the  frame operator $T\colon\rkhs\to\rkhs$, defined formally
\[
T(f):=\sum_{n \in \bN}\langle f, \hat{k}_n\rangle ~\hat{k}_n,
\]
which, in turn, is equivalent to the Gram matrix  
\begin{equation}
\label{eqn:Gram}
G:=(\langle \hat{k}_n, \hat{k}_j\rangle)_{n, j\in\N}
\end{equation}
inducing a bounded operator $G\colon\ell^2\to\ell^2$ (see \cite[Chapter 9]{AM02}). We will refer to sequences that generate a Carleson measure for $\rkhs$ as Carleson sequences. In the literature these sequences are sometimes called Carleson-Newman sequences.

Remarkably, apart from such an operator theoretical reformulation, some well known spaces of analytic functions enjoy also a geometric characterization for such measures. For instance, let $$X=\D^d:=\{z=(z^1, \dots, z^d)\in\C^d\,|\, |z^i|<1\}$$ denote the $d$-dimensional polydisc, and let $\rkhs=H^2_d$ be the Hardy space on $\D^d$, that is, the reproducing kernel Hilbert space on $\D^d$ with kernel 
\[
s(z, w):=\prod_{i=1}^d\frac{1}{1-\overline{w^i}z^i},\qquad z, w\in\D^d.
\]
The scalar product is the following
\[
\langle f,g \rangle_{H^2_d} = \sup_{0\leq r<1} \int_{\bT^d} f(r\zeta) \ol{g(r\zeta)} dm(\zeta),
\]
where $dm$ is the normalized Lebesgue measure on $\bT^d$. We will simply write $\langle \, ,\rangle$ instead of $\langle \, ,\rangle_{H^2_d}$ if no confusion arises.
When $d=1$, we use the standard notation $H^2$, rather than $H^2_1$. Carleson showed in \cite{Carleson62} that if $d=1$ a measure $\mu$ satisfies the embedding condition \eqref{eqn:carlesonrkhs} for the Hardy space if and only if it satisfies the one-box condition, namely if there exists $C_\mu >0$ such that
\begin{equation} \tag{OB}
\label{eqn:onebox}
\mu(S_I) \leq C_\mu |I|
\end{equation}
for all arcs $I\subseteq\T$, where $|I|$ is the arc-length measure and $S_I$ is the Carleson square in $\D$ with basis $I$:
\[
S_I:=\{z\in\D \setminus \{ 0\} \,|\, z/|z|\in I, 1-|z|\leq |I|\}.
\]
Moreover if $Z$ is a sequence in the unit disc that is separated with respect the pseudo-hyperbolic distance, then $\mu_Z$ is a Carlson sequence if and only if $Z$ is \emph{interpolating}, that is, if and only if for any sequence of bounded targets $(w_n)_n$ there exists a bounded analytic function $f$ on the unit disc such that $f(z_n)=w_n$, for all $n$  \cite{Carleson62}.

On the other hand, the geometry of Carleson sequences  for $H^2_d$ and their relation to interpolating sequences seems to be much more complicated if $d\geq 2$. In general no necessary and sufficient condition is known for a measure to be Carleson in the polydisc. Regarding this very interesting problem we refer the reader to the work of Chang \cite{Chang1979} and Carleson's counterexample \cite{Carleson74}. Moreover, in the polydisc an interpolating sequence $Z$ is separated with respect the pseudo-hyperbolic distance and it is a Carleson sequence for the Hardy space \cite{Varopoulos72}, but such two conditions fail to be sufficient for $Z$ to be interpolating \cite{Berndtsson1987}.\\

In order to understand better Carleson sequences in the multi-variable setting, we consider \emph{random sequences with prescribed radii} in the polydisc, and we study the probability of such sequences to generate a Carleson measure for $H^2_d$. A random sequence with prescribed radii in $\D^d$ can be defined as follows. Given a sequence of deterministic radii $(r_n)_{n\in\N}$ in $[0, 1)^d$ and a sequence of i.i.d. random variables $(\theta_n)_{n\in\N}$ defined on the same probability space $(\Omega, \A, \p)$ and uniformly distributed in the $d$-dimensional torus $\T^d$, let $\Lambda=(\lambda_n)_{n\in\N}$ be the random sequence defined by
\[
\lambda_n(\omega):=(r_n^1e^{2\pi i\theta^1_n(\omega)},\dots, r_n^de^{2\pi i\theta^d_n(\omega)}),\qquad n\in\N, \omega\in\Omega.
\]

In order to state our results, it is convenient to introduce a certain counting function. To do so we partition $\D^d$ in the dyadic rectangular regions
\[
A_m:=\{z\in\D^d\,|\, 2^{-(m_i+1)}\leq1-|z^i|<2^{-m_i},\,\, i=1,\dots, d\},\qquad m=(m_1, \dots, m_d)\in\N^d,
\]
and denote by $$N_m:=\#\Lambda\cap A_m,\qquad m\in\N^d$$ the (deterministic) number of points of $\Lambda$ in each $A_m$. Define $|m|:=m_1+\dots+m_d$ for all multi-indices in $\N^d$. Observe that being a Carleson sequence is a tail event since it is independent of any finite number of random variables. Therefore, by Kolmogorov's 0-1 Theorem \cite[Theorem 4.5]{Billingsley1995}, it has probability either $0$ or $1$. We would like to find necessary and sufficient conditions on the sequence $(r_n)_n$ such that the corresponding random sequence is a Carleson sequence almost surely. In  \cite{Rudowicz} Rudowicz proved that for $d=1$ a sequence is almost surely Carleson for the Hardy space if 
\begin{equation}\label{Cochran_condition} \sum_{m \in \bN} 2^{-m} N_m^2 < +\infty. \end{equation}

Recently, in \cite[Theorem 1.1]{chalmoukis22}, with the use of the one box characterization of Carleson measures \eqref{eqn:onebox}, the authors improved Rudowicz's sufficient condition, showing that in order for a random sequence to be Carleson almost surely it is sufficient that for some $\varepsilon, C > 0$
\[
N_m \leq C 2^{(1-\varepsilon)m}.
\]

Moreover, in \cite[Theorem 1.3]{Dayan}, a Rudowicz type sufficient condition has been obtained for Carleson sequences for the Hardy space in the polydisc. 
Our first main result is the sharp version of the  $0-1$ law for random Carleson sequences in $H^2_d$, for all $d\ge1$.
\begin{thm}
\label{thm:Carlesonpolydisc}
Let $d$ be a positive integer and $\Lambda$ be a random sequence in $\D^d$. Then,

\begin{equation}
\label{eqn:0-1Carleson}
\bP\left( \Lambda \,\, \text{is a Carleson sequence for } H^2_d\right) = 
    \begin{dcases}
    1 & \text{if} \,\, N_m \leq 
 C 2^{(1-\varepsilon)|m|}\,\, \text{for some } \varepsilon, C > 0, \\
    0 & \text{otherwise}.
\end{dcases}
\end{equation}
\end{thm}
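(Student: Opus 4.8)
The plan is to work throughout with the Gram operator $G$ of \eqref{eqn:Gram}, since, as recalled above, $\Lambda$ is Carleson for $H^2_d$ exactly when $G\colon\ell^2\to\ell^2$ is bounded. Writing $\hat k_n$ for the normalized kernel at $\lambda_n$ and using the product form of $s$, the entries factor as
\[
G_{nj}=\prod_{i=1}^d\frac{\sqrt{(1-|\lambda_n^i|^2)(1-|\lambda_j^i|^2)}}{1-\overline{\lambda_j^i}\lambda_n^i}.
\]
Two structural facts drive everything. First, because $\E_\theta(1-\rho e^{2\pi i\theta})^{-1}=1$, the off-diagonal mean entry is the rank-one pattern $\E G_{nj}=\|k_{\lambda_n}\|^{-1}\|k_{\lambda_j}\|^{-1}=:a_na_j$. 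Second, for $\lambda_n,\lambda_j$ in a common region $A_m$ the quantity $|G_{nj}|$ is heavy-tailed: it is bounded below by a constant precisely when the two points share a box of side $\sim 2^{-m_i}$ in each coordinate, and is close to $1$ only when they share a much finer box. Since the radii are deterministic, this clustering is really a statement about the random angles; the guiding heuristic is that a fixed $k$-tuple of points of $A_m$ lands in a common cell with probability $\approx 2^{-(k-1)|m|}$, so the expected number of such co-cellular $k$-tuples in $A_m$ is of order $N_m^k\,2^{-(k-1)|m|}$, and the threshold $2^{(1-1/k)|m|}$ is exactly where this count passes from vanishing to exploding.

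For the probability-$0$ direction, suppose the growth bound fails. Then for each $k$ there are infinitely many $m$ with $N_m\ge 2^{(1-1/(2k))|m|}$, and, after first pigeonholing the deterministic radii into a small radial sub-box so that co-cellularity reduces to the angles falling in a common small box, the expected number of co-cellular $k$-tuples in $A_m$ tends to $+\infty$ along this subsequence. A second-moment (Paley--Zygmund) argument then gives $\p(A_m\text{ contains such a }k\text{-tuple})\to 1$, and these events are independent over distinct $m$ because the assignment of points to regions depends only on the radii while the relevant angles are disjoint. By Borel--Cantelli a co-cellular $k$-tuple (in a box of side $\eta 2^{-m_i}$) occurs almost surely, and the corresponding $\hat k_n$ are then pairwise within $1-C\eta$ of parallel, forcing $\|G\|\ge c_\eta k$. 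As $k$ is arbitrary, $\|G\|=\infty$ almost surely and $\Lambda$ is a.s. not Carleson.

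For the probability-$1$ direction, assume $N_m\le C2^{(1-\varepsilon)|m|}$ and split $G=\E G+(G-\E G)$. By the rank-one computation, $\E G$ is $aa^{*}$ plus a bounded diagonal operator, and $\|aa^{*}\|=\sum_n a_n^2\approx\sum_m N_m2^{-|m|}\le C\sum_m 2^{-\varepsilon|m|}<\infty$, so $\E G$ is bounded. The crux is the almost-sure boundedness of the centred Gramian $G-\E G$. A purely absolute estimate is useless here, since the row sums $\sum_j|G_{nj}|$ diverge logarithmically, so one must exploit the cancellation coming from the independent uniform phases. The route is: a first-moment Borel--Cantelli estimate (the expected number of co-cellular $k$-tuples in $A_m$ is $\lesssim 2^{(1-k\varepsilon)|m|}$, summable once $k>1/\varepsilon$) shows that a.s. no box of any $A_m$ with $|m|$ large carries more than $M=M(\varepsilon)$ points; equivalently, the proximity graph has bounded degree, so $\Lambda$ splits a.s. into $M$ pseudo-hyperbolically separated subsequences. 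Since a finite union of Carleson sequences is Carleson, it suffices to treat a separated piece, for which the heavy tails disappear; there each diagonal block obeys a random-matrix concentration bound $\|G_{m,m}-\E G_{m,m}\|\lesssim\sqrt{N_m2^{-|m|}}\le 2^{-\varepsilon|m|/2}$, and the blocks are assembled across scales by an almost-orthogonality (Cotlar--Stein/Schur) argument that uses the fast decay of $s$ between different scales.

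The main obstacle is exactly this random-matrix estimate for $\|G_{m,m}-\E G_{m,m}\|$: it must be uniform in $m$, it must handle entries that are heavy-tailed and only partially independent (entry $(n,j)$ depends on both $\theta_n$ and $\theta_j$), and it must come with failure probabilities summable in $m$ so that Borel--Cantelli yields the bound simultaneously at all scales. Coupling this quantitative control with the cross-scale almost-orthogonality, while keeping it compatible with the separated-sequence reduction, is the technically demanding part of the argument.
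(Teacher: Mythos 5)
Your probability-zero half is essentially sound and runs parallel to the paper's: both reduce failure of the growth bound to the almost sure occurrence, at infinitely many scales $m$, of clusters of $k$ points of $A_m$ in a common cell of pseudo-hyperbolic size, after refining the radial coordinate into sub-boxes. The paper packages this as Theorem \ref{Steinhaus_separation}, proved via Kolchin's occupancy asymptotics (Corollary \ref{variation_kolc}), and then quotes \cite[Proposition 9.11]{AM02}; your Paley--Zygmund second-moment count of co-cellular $k$-tuples, followed by a direct lower bound $\|G\|\gtrsim k$ from a block of nearly parallel normalized kernels, is a workable alternative, provided you add the paper's thinning device (discard radii so that $N_m2^{-|m|}\to0$ while the relevant divergence persists; a subsequence that is a.s.\ non-Carleson forces $\Lambda$ to be a.s.\ non-Carleson), since otherwise the cell-occupancy ratio $\alpha$ need not tend to $0$ and the second-moment comparison degenerates.

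The genuine gap is in the probability-one half, and you name it yourself: the concentration estimate $\|G_{m,m}-\E G_{m,m}\|\lesssim\sqrt{N_m2^{-|m|}}$, uniform in $m$ and with summable failure probabilities, is asserted rather than proved, and it is exactly the crux. Centring the Gramian does not remove the obstruction you identify (entry $(n,j)$ depends on both $\theta_n$ and $\theta_j$, so standard matrix concentration for independent entries does not apply). The paper's missing idea is structural: do not work with $G-\E G$ at all, but with the frame operator $T=\sum_n S_nS_n^*$, which has the same norm as $G$ and is a sum of \emph{independent} rank-one positive operators, one per random point; truncate the kernels to polynomials of multi-degree at most $L_j=2^{3b_j}$ (Lemma \ref{lem:partialsums}) so that Tropp's matrix Chernoff inequality (Theorem \ref{thm:cheroff}) applies in finite dimension, observe that the expectation of the truncated frame operator is \emph{diagonal} with norm $\mu_j\lesssim2^{-\varepsilon a_j/2}$, and group scales into geometrically growing overlapping windows $I_j=[2^{j-1},2^j/\varepsilon]$ so that the dimensional factor $L_j^d$ is beaten by the exponential tail and Borel--Cantelli applies. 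Two further points in your outline would not survive scrutiny: the reduction to separated pieces buys nothing (separation does not imply Carleson even for $d=1$, and it does not decouple the entries), and the cross-scale Cotlar--Stein assembly you invoke would require genuine almost-orthogonality estimates between random blocks at different scales, which are not available; in the paper, Cotlar--Stein is used only trivially, to combine the boundedly overlapping diagonal windows (Lemma \ref{lem:overlappingblocks}), while all cross-window interaction is controlled by the finiteness of $\E(\|G_2\|_{HS}^2)$, using $\E|\langle S_n,S_l\rangle|^2\simeq\prod_{i=1}^d(2^{m_i}+2^{k_i})^{-1}$ together with the $\varepsilon$-gain across the gap between $|m|\le2^j$ and $|l|\ge2^j/\varepsilon$.
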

Notice that for $d\ge2$ the one-box condition \eqref{eqn:onebox} is unavailable, hence the techniques used in \cite{chalmoukis22} do not provide insights to the proof of Theorem \ref{thm:Carlesonpolydisc}. We look instead at the \emph{random Gram matrix}  generated by the kernel vectors associated to the random sequence $\Lambda$ as in \eqref{eqn:Gram}, bypassing the difficulty of not having a geometric characterization of Carleson measures for the Hardy space in the polydisc.

Another ingredient for the proof of Theorem \ref{thm:Carlesonpolydisc} is the   $0-1$ law of random  sequences that can be partitioned into finitely many separated sequences with respect to the pseudo-hyperbolic distance in $\D^d$. More specifically, let 
\[
\rho(z, w):=\max_{i=1, \dots, d}\left|\frac{z^i-w^i}{1-\overline{w^i}z^i}\right|,\qquad z, w\in\D^d\,
\]
denote the pseudo-hyperbolic distance in the polydisc. We say that a sequence $Z=(z_n)_n$ in $\D^d$ is separated if 
\begin{equation}
\label{eqn:ws} \tag{S}
\inf_{n\ne j}\rho(z_n, z_j)>0.
\end{equation}

Random separated sequences have been first studied by Cochran 
\cite{Cochran90} who proved that if $d=1$ then \eqref{Cochran_condition} is a necessary and sufficient condition for a random sequence to be separated almost surely. In \cite{Dayan}, the authors extended Cochran's result to the polydisc. Our next theorem extends this results to finite unions of separated sequences. 
\begin{thm}
 \label{Steinhaus_separation}
Let $\Lambda$ be a random sequence in the polydisc. Then for all $M\in \N$,
\begin{equation}
\label{eqn:FU}    
\p(\Lambda\,\text{is the union of $M$ separated sequences})=\begin{cases}
1\quad\text{if}\,\, &\underset{m\in\N^d}{\sum}N_m^{1+M}2^{-M|m|}<\infty,\\
0\quad\text{if}\,\, &\underset{m\in\N^d}{\sum}N_m^{1+M}2^{-M|m|}=\infty.
\end{cases}
\end{equation}
\end{thm}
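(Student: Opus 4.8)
The plan is to prove a $0$-$1$ law for the event that $\Lambda$ decomposes into $M$ separated sequences, and the natural strategy is to reduce partitioning into $M$ separated pieces to a combinatorial coloring problem on a random graph. Specifically, I would build the \emph{clustering graph} $\Gamma$ whose vertices are the points of $\Lambda$, joining $\lambda_n$ and $\lambda_j$ by an edge precisely when $\rho(\lambda_n,\lambda_j)<\delta$ for a fixed small threshold $\delta$. A sequence is a union of $M$ separated sequences (for a suitable separation constant) if and only if the vertices can be properly colored with $M$ colors so that each color class is separated, which is essentially equivalent to the graph $\Gamma$ having bounded clique number, or more precisely to $\Gamma$ being $M$-colorable in the relevant sense. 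So the core of the proof is to compute the critical threshold for the random graph $\Gamma$ to contain a clique of size $M+1$: if cliques of size $M+1$ appear infinitely often almost surely, then no $M$-coloring can separate them and the event fails; conversely, if cliques of size $M+1$ occur only finitely often, one expects to be able to partition into $M$ separated subsequences.

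The key computation is therefore a second-moment / Borel--Cantelli estimate on the expected number of $(M+1)$-cliques. First I would estimate, for a fixed dyadic box $A_m$ and for $M+1$ points whose radii lie in $A_m$, the probability that all pairwise pseudo-hyperbolic distances are smaller than $\delta$. Because the angular coordinates $\theta_n$ are independent and uniform on $\T^d$, and because two points with radii in $A_m$ are $\rho$-close with probability comparable to $2^{-|m|}$ (the angular window in each coordinate must have width $\sim 2^{-m_i}$), the probability that a given unordered $(M+1)$-tuple forms a clique should be of order $2^{-M|m|}$: one point is free and the remaining $M$ must each fall into an angular window of measure $\sim 2^{-|m|}$ relative to the first. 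Multiplying by the number $\binom{N_m}{M+1}\sim N_m^{M+1}$ of candidate tuples in $A_m$ and summing over $m$ produces exactly the series $\sum_m N_m^{M+1}2^{-M|m|}$. This explains the precise exponents appearing in \eqref{eqn:FU} and reduces both directions to a Borel--Cantelli argument: convergence of the series forces only finitely many cliques almost surely (first Borel--Cantelli), while divergence, together with an independence or second-moment argument across boxes, forces infinitely many (second Borel--Cantelli).

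For the convergent direction I would then need the genuinely harder implication: that finitely many $(M+1)$-cliques actually guarantees an $M$-separated partition. The subtlety is that absence of large cliques in a geometric graph does not automatically yield a good coloring unless one controls the local structure, so I expect to invoke a Vizing- or greedy-type coloring argument adapted to the dyadic scales, coloring points box by box while carrying over only a bounded amount of interaction between adjacent scales. One must verify that the few exceptional cliques can be absorbed into a finite modification of the sequence, which is harmless since separation into $M$ pieces is a tail-type property up to finitely many points. The main obstacle will be precisely this passage from a clique-counting statement to an actual coloring with a \emph{uniform} separation constant: I must ensure that the chromatic bound holds with a single $\delta>0$ valid almost surely, rather than a $\delta$ that degenerates as $|m|\to\infty$, which requires quantifying how the angular windows and the dyadic geometry interact uniformly across all scales. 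Matching the sharp exponent $M$ in $2^{-M|m|}$, as opposed to a lossy bound, is where the clique-counting heuristic must be carried out with care, and I would expect this to be the technical heart of the argument.
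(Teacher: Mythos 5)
Your blueprint has the same overall skeleton as the paper's proof (count $(M+1)$-clusters box by box, first Borel--Cantelli for the convergent half, independence across boxes plus a cluster-probability lower bound for the divergent half), but as written it contains two genuine gaps, both in the divergent direction. First, you fix a single threshold $\delta$ and claim that infinitely many $(M+1)$-cliques of your graph $\Gamma$ rule out the event. That inference is false: the event ``union of $M$ separated sequences'' quantifies over an \emph{arbitrary} separation constant, and infinitely many $\delta$-cliques are perfectly compatible with a partition whose classes are only $(\delta/2)$-separated. For instance, with $M=1$, a sequence consisting of pairs at mutual pseudo-hyperbolic distance $\delta/2$, distinct pairs being far apart, has infinitely many $2$-cliques at threshold $\delta$ and is nevertheless itself a separated sequence. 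To kill the event you must exhibit, almost surely, clusters of $M+1$ points of diameter at most $2^{-l}$ for \emph{every} $l$, and then intersect countably many probability-one events; this is exactly why the paper refines each $A_m$ radially into the cells $A_m^j$ and angularly into the rectangles $R^{m\oplus l}_k$, checking that the refinement only costs an $l$-dependent constant, so that the series $\sum_m N_m^{1+M}2^{-M|m|}$ diverges at every scale simultaneously.

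Second, your key estimate --- that a fixed $(M+1)$-tuple in $A_m$ is a $\delta$-clique with probability $\asymp 2^{-M|m|}$ because ``one point is free and the remaining $M$ must fall into angular windows'' --- silently assumes the radii are compatible. Pseudo-hyperbolic closeness constrains radii as well as arguments: $A_m$ has bounded but not small hyperbolic diameter in the radial direction, so a tuple whose (deterministic) radii are pairwise radially $\delta$-separated forms a clique with probability zero, whatever the angles do. Since the radii are adversarial, the expected clique count can fall far below $N_m^{M+1}2^{-M|m|}$ unless you first pigeonhole: only boundedly many pairwise radially $\delta$-separated radii fit inside $A_m$, so some radial cell contains at least a constant (depending on $l$ and $d$) times $N_m$ points, and you run the count there; this is the paper's step $L_m\geq N_m2^{-dl}$, and it preserves the sharp exponent. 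Two further remarks. Within a single box the candidate tuples are strongly dependent, so the passage from expected count to $\p(\exists\,\text{cluster})\gtrsim\min\left(1, N_m^{1+M}2^{-M|m|}\right)$ must actually be carried out: a Paley--Zygmund second-moment bound would do, while the paper instead prunes so that $N_m2^{-|m|}\to0$ and invokes Kolchin's occupancy asymptotics (Corollary \ref{variation_kolc}); your appeal to independence \emph{across} boxes and the second Borel--Cantelli lemma is correct, since disjoint families of the i.i.d. angles are involved. Finally, the step you single out as the technical heart --- passing from clique number at most $M$ to an $M$-coloring --- is in fact the easy part in this geometry and needs no Vizing-type input (which concerns edge colorings anyway): a clique of $\Gamma$ lies in a single $\rho$-ball of radius $\delta$, so absence of $(M+1)$-cliques means every ball of radius $\delta/2$ carries at most $M$ points, and coloring the points greedily in order of increasing distance from the origin, avoiding the colors of previously colored points within $\delta/2$, yields $M$ classes each $(\delta/2)$-separated; the finitely many exceptional points can then be reabsorbed, as you note, since each $N_m$ is finite in the convergent case and $\rho(z,\lambda_n)\to1$ along the sequence.
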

It turns out that $\rho(z, w)$ coincides with the largest absolute value that an analytic function on $\D^d$ bounded by $1$ that vanishes at $w$ can attain at $z$. The solution of the same extremal problem in $H^2_d$ gives raise to the distance
\[
\rho_s(z, w):=\sqrt{1- \frac{|\langle {s}_z, {s}_w\rangle|^2}{\Vert s_z \Vert^2 \Vert s_w \Vert ^2}}\qquad z, w\in\D^d.
\]
Since $\rho$ and $\rho_s$ are comparable,  one can replace $\rho$ with $\rho_s$ in \eqref{eqn:ws}, and this describes the same sequences. 
Since any Carleson sequence for $H^2_d$ is the finite union of separated sequences with respect $\rho_s$, \cite[Proposition 9.11]{AM02}, the second half of Theorem \ref{thm:Carlesonpolydisc} is therefore deduced from the second half of Theorem \ref{Steinhaus_separation}.

Concerning interpolating sequences for $H^\infty$ in the polydisc, a sufficient condition of geometric flavor has been obtained by Berndtsson et al. \cite{Berndtsson1987}. It states that if a sequence $Z=(z_n)_n$ satisfies the {\it uniform separation} condition, i.e., 
\begin{equation}
    \inf_{n\in \mathbb{N}} \prod_{j \neq n}\rho(z_j,z_n) > 0,
\end{equation}
then it is interpolating. This is in fact an equivalence for $d=1$ by Carleson's Theorem, but not for $d\geq 2$. In \cite[Question 1]{Dayan}, the authors asked  if a random sequence $\Lambda$, which is almost surely weakly separated, must satisfy, almost surely, the uniform separation condition. Although Theorem \ref{thm:Carlesonpolydisc} doesn't answer the above question, it is coherent with a positive answer since both conditions imply the Carleson measure condition. This might reinforce the belief that there is an affirmative answer to the aforementioned question. \\

The last part of this article is devoted to the study of random Carleson sequences for the Hardy space on the unit ball.
Let $\B_d:=\{z\in\C^d\,|\,\sum_{i=1}^d |z^i|^2<1\}$ denote the $d$-dimensional unit ball on $\C^d$. 
The definition of a random sequence $\Lambda=(\lambda_n)_n$ in the unit ball is reminiscent of the analogous construction on the unit disc: given a deterministic sequence of radii $(r_n)_n$ in $(0, 1)$ and a sequence $(\xi_n)_n$ of i.i.d. random variables defined on a probability space $(\Omega, \A, \p)$ and uniformly distributed on the unit sphere $\partial\B_d$, one defines
\[
\lambda_n(\omega):=r_n\xi_n(\omega),\qquad n\in\N, \omega\in\Omega.
\]
For all $m$ in $\N$, let 
\[
N_m:=\#\Lambda\cap\{2^{-(m+1)}\le 1-|z|<2^{-m}\}\subset\B_d.
\]

The question of whether $\Lambda$ generates almost surely a Carleson measure for some significant spaces of analytic functions on the unit ball has been investigated in \cite{Dayan} and \cite{Massaneda96}.
For all $0\le a< d$, denote by $B^a_d$ the reproducing kernel Hilbert space on $\B_d$ having kernel
\[
k^{(a)}_{w}(z):=\frac{1}{(1-\langle z, w\rangle_{\C^d})^{d-a}}, \qquad z, w\in\B_d,
\]

where $\langle z, w\rangle_{\mathbb{C}^d} :=\sum_{i=1}^d z^i \overline{w^i}.$ For $a=0$, $B^a_d$ is the Hardy space on the unit ball, while for $ 0 < a<d$ one obtains a range of Besov-Sobolev spaces, including the Drury-Arveson space ($a=d-1$). For more information about Besov-Sobolev spaces see \cite{Zhu2007} and \cite{Hartz2023} for the Drury Arveson space.

Regarding random Carleson sequences for $B^a_d$, for all $0<a<d$ the same phenomenon observed in \cite[Theorem 1.4]{chalmoukis22} for the unit disc occurs on in the unit ball: $\Lambda$ generates a Carleson measure for $B^a_d$ almost surely if and only if it is a finite measure (see \cite[Theorem 4.3]{Dayan}). We show that this is not the case for $a=0$, as the $0-1$ law for Carleson sequences for the Hardy space on the unit ball resembles the one for the unit disc and the polydisc, refining a theorem of Massaneda \cite[Theorem 3.2]{Massaneda96}.
\begin{thm}
\label{thm:ball}
   Let $\Lambda$ be a random sequence in the unit ball. Then
\[
\p(\Lambda\,\text{is a Carleson sequence for }\,B^0_d)=\begin{cases}
1\,\, &\text{if}\,\, N_m\leq C 2^{d(1-\varepsilon)m}\,\text{for some}\,\, \varepsilon, C>0, \\
0\,\, &\text{otherwise}.
\end{cases}
\] 
\end{thm}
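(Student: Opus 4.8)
The plan is to convert the Carleson condition for $B^0_d$ into its geometric form and then run a Borel--Cantelli argument, shell by shell, over a net of Carleson tents; the probability-zero half will be extracted from the separation dichotomy, exactly as the vanishing half of Theorem~\ref{thm:Carlesonpolydisc} is deduced from Theorem~\ref{Steinhaus_separation}. Unlike the polydisc, the Hardy space $B^0_d$ admits a genuine geometric description of Carleson measures (H\"ormander): writing $B(\zeta,\delta):=\{\eta\in\partial\B_d:|1-\langle\eta,\zeta\rangle_{\C^d}|<\delta\}$ for the nonisotropic balls on the sphere and $T(\zeta,\delta):=\{z\in\B_d:|1-\langle z,\zeta\rangle_{\C^d}|<\delta\}$ for the associated tents, a positive measure $\mu$ is Carleson for $B^0_d$ if and only if $\mu(T(\zeta,\delta))\lesssim\sigma(B(\zeta,\delta))\approx\delta^d$ uniformly in $\zeta\in\partial\B_d$ and $\delta\in(0,1]$, where $\sigma$ is the normalized surface measure. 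Since $\|k^{(0)}_{\lambda}\|^{-2}=(1-|\lambda|^2)^d$, a point of $\Lambda$ in the $m$-th dyadic shell contributes mass $\approx 2^{-dm}$ to $\mu_\Lambda$, and it can belong to $T(\zeta,2^{-M})$ only if $m\ge M-O(1)$ and its angular part $\xi_n$ lies in $B(\zeta,C2^{-M})$, an event of probability $\approx 2^{-dM}$. As the shells are of homogeneous type it suffices to test the tent condition on a fixed covering of $\partial\B_d$ by $\approx 2^{dM}$ balls of radius $2^{-M}$, $M\in\N$. Finally, under the hypothesis $N_m\le C2^{d(1-\varepsilon)m}$ one has $\sum_m N_m2^{-dm}\le C\sum_m 2^{-d\varepsilon m}<\infty$, so $\mu_\Lambda$ is \emph{deterministically finite} and the tent condition is automatic at scales $\delta\gtrsim 1$; only the small scales $M\to\infty$ require work.

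For the probability-one half, fix a tent $T(\zeta,2^{-M})$ and decompose the mass it carries as $\sum_{m\ge M}S_m$ with $S_m:=2^{-dm}\,\#\{\text{shell-}m\text{ points of }\Lambda\text{ in }T\}$. The naive route---a Bernstein estimate for this sum---fails, and this is the main obstacle: a single top-shell point already contributes mass $\approx 2^{-dM}$, of the size of the admissible threshold $\delta^d$, so the deviation parameter is comparable to the maximal summand and Bernstein yields only an $M$-independent probability, far too weak against the union bound over the $\approx 2^{dM}$ tents and far too weak to keep the Carleson constant uniform. I would instead control occupancies shell by shell. Fix $0<\gamma<d\varepsilon$ and an integer $k_0>1/\varepsilon$, and consider the event that some ball of radius $2^{-M}$ captures more than $k_0\,2^{(d-\gamma)(m-M)}$ shell-$m$ points for some $m\ge M$. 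The occupancy is $\mathrm{Bin}(N_m,\approx 2^{-dM})$ with mean $\mu\lesssim 2^{d(m-M)-d\varepsilon m}$, lying below the threshold $k=k_0 2^{(d-\gamma)(m-M)}$ by the factor $\mu/k\approx 2^{\gamma(m-M)-d\varepsilon m}\to0$; the Chernoff bound $\p(\mathrm{Bin}>k)\le(e\mu/k)^{k}$, summed against the $\approx 2^{dM}$ balls and over $m\ge M$ and $M\in\N$, converges precisely because the cushion $k_0>1/\varepsilon$ beats the entropy at the critical shell $m=M$ (there the contribution is $\approx 2^{dM}(e\mu/k)^{k_0}\approx 2^{d(1-\varepsilon k_0)M}$, summable), while higher shells decay super-geometrically as $k$ grows. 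By Borel--Cantelli there is a.s.\ an $M_0$ with the occupancy bound valid for all $M\ge M_0$, whence
\[
\mu_\Lambda\bigl(T(\zeta,2^{-M})\bigr)\le \sum_{m\ge M}k_0\,2^{(d-\gamma)(m-M)}\,2^{-dm}=\frac{k_0}{1-2^{-\gamma}}\,2^{-dM}\lesssim \delta^d .
\]
Combined with the finiteness of $\mu_\Lambda$, which absorbs the finitely many coarse scales $M<M_0$, this establishes the tent condition uniformly and shows $\Lambda$ is a.s.\ Carleson.

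For the probability-zero half I would argue as in the polydisc. By \cite[Proposition 9.11]{AM02}, which holds in any reproducing kernel Hilbert space, every Carleson sequence for $B^0_d$ is a finite union of $\rho_s$-separated sequences; and the proof of Theorem~\ref{Steinhaus_separation} transfers to the ball once $2^{-|m|}$ is replaced by the cap scale $2^{-dm}$, yielding $\p(\Lambda\ \text{is a union of}\ M\ \text{separated sequences})=0$ whenever $\sum_m N_m^{1+M}2^{-dMm}=\infty$. If the radii fail the stated growth bound, then $\limsup_m (\log_2 N_m)/m\ge d$, so along a subsequence $N_m\ge 2^{(d-\eta)m}$ for arbitrarily small $\eta>0$, and then $N_m^{1+M}2^{-dMm}\ge 2^{(d-\eta(1+M))m}\to\infty$ as soon as $\eta<d/(1+M)$, so the series diverges for \emph{every} $M$. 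Hence a.s.\ $\Lambda$ is not a union of $M$ separated sequences, simultaneously for all $M\in\N$, so a.s.\ it is not a finite union of separated sequences and therefore a.s.\ not Carleson. I expect the only genuinely delicate point to be the balance in the second paragraph between the entropy factor $2^{dM}$, the gain $2^{-d\varepsilon m}$ coming from $N_m\le C2^{d(1-\varepsilon)m}$, and the cushion $k_0>1/\varepsilon$; everything else is the transfer of homogeneous-space and separation estimates already available on the ball.
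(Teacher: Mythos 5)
Your proof is correct, but it takes a genuinely different route from the paper's on the sufficiency half. The paper argues softly: since $N_m\lesssim 2^{d(1-\varepsilon)m}$ makes $\mu_\Lambda$ a \emph{finite} measure for the Besov--Sobolev space $B^\nu_d$ whenever $0<\nu<d\varepsilon$, \cite[Theorem 4.3]{Dayan} yields that $\Lambda$ is almost surely Carleson for $B^\nu_d$; the Hardy Gramian is then the Schur (entrywise) product of the almost surely bounded $B^\nu_d$ Gramian with a positive semi-definite matrix with unit diagonal, and Lemma \ref{lemma:gram} transfers the norm bound --- no tents, nets, or occupancy estimates appear at all. Your shell-by-shell Chernoff argument over a net of $\approx 2^{dM}$ nonisotropic balls, with thresholds $k_0\,2^{(d-\gamma)(m-M)}$, $0<\gamma<d\varepsilon$, and cushion $k_0>1/\varepsilon$, is instead a quantitative, self-contained proof in the spirit of Massaneda's original argument for \cite[Theorem 3.2]{Massaneda96}; the balance you single out is exactly right (at the critical shell $m=M$ the union bound costs $2^{dM}$ against a gain $2^{-d\varepsilon k_0 M}$, whence $k_0>1/\varepsilon$), and your geometric summation gives the explicit a.s.\ bound $\mu_\Lambda(T(\zeta,2^{-M}))\lesssim 2^{-dM}$, which the paper's argument never makes visible. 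On the probability-zero half the two proofs essentially coincide: the paper simply cites \cite[Theorem 3.4]{Massaneda96} for the separation dichotomy on the ball, while you re-derive it by transferring Theorem \ref{Steinhaus_separation} with the cap scale $2^{-dm}$ (legitimate, since $\rho_s$- and $\rho$-separation coincide on the ball because $1-\rho_s^2=(1-\rho^2)^d$), and your divergence computation ($N_m\ge 2^{(d-\eta)m}$ along a subsequence with $\eta<d/(1+M)$, so the series diverges for every $M$) is correct. What each approach buys: yours exploits the geometric one-box characterization, which exists on the ball but not on the polydisc, and controls the Carleson constant scale by scale; the paper's is much shorter and purely operator-theoretic, leveraging the known $0$--$1$ law for $B^\nu_d$ plus an algebraic Schur-multiplier lemma, which also explains structurally why the Hardy case on the ball reduces to the Besov--Sobolev case.
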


The paper is structured as follows. Section \ref{sec:fu} contains the proof of Theorem \ref{Steinhaus_separation}. An analogous result for the unit ball have been obtained by Massaneda in \cite[Theorem 3.4]{Massaneda96}. The probabilistic tools that we use are contained in Section \ref{sec:prob_tools}, and they differ from the ones used in Massaneda's work.

As a by-product of our study of random sequences that can be written as the union of finitely many separated sequences, we find in Section \ref{sec:Bloch} the $0-1$ law for a random sequence in the unit disc to be interpolating for the Bloch space.

Section \ref{sec:stein:carleson} contains the proof of the first half of Theorem \ref{thm:Carlesonpolydisc}. The main tool used, Theorem \ref{thm:cheroff}, comes from the theory of random matrices, and it allows us to estimate the probability that some diagonal blocks of the random Gramian $G$ are big in norm. Section \ref{sec:Dirichlet} contains some additional remarks on random Carleson sequences for Dirichlet-type kernels in the polydisc. Finally, in Section \ref{sec:ball} we prove Theorem \ref{thm:ball}.\\

\subsection{Notation} If $f$ and $g$ are positive expressions, we will write $f \lesssim g$ if there exists $C>0$ such that $f\leq C g$, where $C$ does not depend on the parameters behind $f$ and $g$, or $\lesssim_a$ if the implicit constant $C$ depends on $a$. We will simply write $f \simeq g$ if $f \lesssim g$ and $g \lesssim f$. Finally when $f$ and $g$ are expressions for which we can consider the limit of their quotient, with $f \sim g$ we mean that $\lim f/g =1$, while $f \sim_a g$ means that $\lim f/g$ is equal to a constant that depends on $a$.

\section{Union of finitely many separated sequences}
\label{sec:fu}
\subsection{A Probabilistic Tool}
\label{sec:prob_tools}
 Let $N$ and $n$ be two positive integers, and consider the problem of placing $n$ points at random into $N$ boxes, where the boxes are chosen independently for each point, and for all points each box has the same probability of being chosen. We are interested in the random variable $\mu_r(n,N)$ that counts the number of boxes in which there are exactly $r$ points. We want to estimate, for $n,N \to \infty$, the number $\bP(\mu_r(n,N)=1)$.
The tools that we are going to use come from \cite{kolchin78}. Define 
\begin{align*}
    \alpha & :=\frac{n}{N} & p_r & := \frac{\alpha^r e^{-\alpha}}{r!}\\
    \sigma_r^2 & := \frac{\alpha}{1-p_r}\left( 1-p_r-\frac{(\alpha-r)^2}{\alpha}p_r \right) & \alpha_r &  := \frac{\alpha-rp_r}{1-p_r}.
\end{align*}

Consider i.i.d. random variables $\eta_1,\dots,\eta_N$ having Poisson distribution with parameter $\alpha$ and let $\zeta_N = \eta_1 + \dots + \eta_N$. Similarly, define $\eta_i^{(r)}$, $i=1,\dots,N$ i.i.d. random variables with distribution
$$
\bP(\eta_i^{(r)}=l)=\bP(\eta_i=l|\eta_i\neq r)
$$
and by $\zeta_N^{(r)}$ the sum of the $\eta_i^{(r)}$.
\begin{lem}[{\cite[Lemma 1, p.60]{kolchin78}}] \label{kolc_lem1}
    $$
    \bP(\mu_r(n,N)=k)=\binom{N}{k}p_r^k(1-p_r)^{N-k}\frac{\bP(\zeta_{N-k}^{(r)}=n-kr)}{\bP(\zeta_N=n)}.
    $$
\end{lem}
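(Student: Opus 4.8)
The plan is to reduce the allocation model to independent Poisson variables via the classical \emph{Poissonization} identity, and then to decompose the event $\{\mu_r(n,N)=k\}$ according to which $k$ boxes receive exactly $r$ points.

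First I would record the Poissonization step. Placing $n$ points into $N$ boxes uniformly and independently produces occupation numbers $(X_1,\dots,X_N)$ with the multinomial law $\bP(X_1=k_1,\dots,X_N=k_N)=\frac{n!}{k_1!\cdots k_N!}N^{-n}$ on $\{k_1+\dots+k_N=n\}$. On the other hand, if $\eta_1,\dots,\eta_N$ are i.i.d. Poisson with parameter $\alpha=n/N$, then for any $k_1+\dots+k_N=n$ one has $\bP(\eta_1=k_1,\dots,\eta_N=k_N)=\frac{\alpha^n e^{-n}}{k_1!\cdots k_N!}$, and since $\zeta_N$ is Poisson of parameter $n$, dividing by $\bP(\zeta_N=n)=\frac{n^n e^{-n}}{n!}$ returns precisely the multinomial probability above. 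Hence $(X_1,\dots,X_N)$ has the same law as $(\eta_1,\dots,\eta_N)$ conditioned on $\{\zeta_N=n\}$, so that
\[
\bP(\mu_r(n,N)=k)=\bP\bigl(\#\{i:\eta_i=r\}=k \,\big|\, \zeta_N=n\bigr).
\]

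Next I would compute the joint (unconditioned) probability of $\{\#\{i:\eta_i=r\}=k\}\cap\{\zeta_N=n\}$ by summing over the choice of the set $S$ of $k$ boxes containing exactly $r$ points. Fixing such an $S$ of size $k$, independence of the $\eta_i$ lets the corresponding probability factor as $\prod_{i\in S}\bP(\eta_i=r)$ times the probability that $\eta_i\neq r$ for all $i\notin S$ and $\sum_{i\notin S}\eta_i=n-kr$. The first factor equals $p_r^k$. For the second, I would use that $\bP(\eta_i\neq r)=1-p_r$ for each of the $N-k$ remaining boxes and that, conditionally on $\{\eta_i\neq r\}$, each $\eta_i$ has the law of $\eta_i^{(r)}$; thus that probability equals $(1-p_r)^{N-k}\,\bP(\zeta_{N-k}^{(r)}=n-kr)$, where $\zeta_{N-k}^{(r)}$ is the sum of $N-k$ i.i.d. copies of $\eta^{(r)}$. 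Summing over the $\binom{N}{k}$ admissible sets $S$ gives
\[
\bP\bigl(\#\{i:\eta_i=r\}=k,\ \zeta_N=n\bigr)=\binom{N}{k}p_r^k(1-p_r)^{N-k}\bP(\zeta_{N-k}^{(r)}=n-kr),
\]
and dividing by $\bP(\zeta_N=n)$ delivers the claimed formula.

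The routine points are the multinomial identity and the Poisson-conditioning equivalence. The one step that requires genuine care is the factorization on the complement of $S$: one must check that imposing $\eta_i\neq r$ for each of the $N-k$ remaining boxes while prescribing their total is exactly captured by the normalizing constant $(1-p_r)^{N-k}$ together with the law of the conditioned sum $\zeta_{N-k}^{(r)}$. This is where the definition of $\eta_i^{(r)}$ through $\bP(\eta_i^{(r)}=l)=\bP(\eta_i=l\mid\eta_i\neq r)$ enters, and it is the main—though mild—obstacle in the argument.
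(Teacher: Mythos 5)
Your proposal is correct: the Poissonization identity $(X_1,\dots,X_N)\overset{d}{=}(\eta_1,\dots,\eta_N)\mid\{\zeta_N=n\}$ holds exactly as you compute (using $N\alpha=n$), and the decomposition over the set $S$ of $k$ boxes with exactly $r$ points, together with the factorization $(1-p_r)^{N-k}\,\bP(\zeta_{N-k}^{(r)}=n-kr)$ on the complement (valid because conditioning each coordinate on its own event $\{\eta_i\neq r\}$ preserves independence), yields the stated formula after dividing by $\bP(\zeta_N=n)$. The paper offers no proof of this lemma, citing it directly from Kolchin's book, and your argument is precisely the standard derivation given there, so there is nothing to flag beyond noting that degenerate cases (e.g.\ $n-kr<0$) make both sides vanish and cause no trouble.
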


\begin{thm}[{\cite[Theorem 1, p.61]{kolchin78}}] \label{kolc_thm1}
    If $m \to \infty$ and $\alpha m \to \infty$, then for fixed $r \geq 2$,
    $$
    \bP(\zeta_m^{(r)}=l)=\frac{1}{\sigma_r\sqrt{2\pi m}}e^{-\frac{(l-m\alpha_r)^2}{2m\sigma_r^2}}(1+o(1)),
    $$
    uniformly with respect to $\frac{l-m\alpha_r}{\sigma_r\sqrt{m}}$ in any finite interval.
\end{thm}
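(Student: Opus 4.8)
The statement is a local central limit theorem for the sums $\zeta_m^{(r)}=\eta_1^{(r)}+\dots+\eta_m^{(r)}$ of i.i.d.\ integer-valued summands; the only twist compared with a textbook local CLT is that the common law of the $\eta_i^{(r)}$ depends on $\alpha$, which is itself allowed to vary with $m$. My plan is the Fourier-inversion method in its triangular-array form. First I would record the characteristic function of a single summand: since $\eta_i\sim\mathrm{Poisson}(\alpha)$ has characteristic function $e^{\alpha(e^{it}-1)}$, conditioning on $\{\eta_i\ne r\}$ yields
\[
\phi(t):=\E\big[e^{it\eta_i^{(r)}}\big]=\frac{e^{\alpha(e^{it}-1)}-p_r\,e^{irt}}{1-p_r},\qquad |t|\le\pi .
\]
Differentiating $\log\phi$ at $t=0$ identifies the first two cumulants of $\eta_i^{(r)}$, namely the mean $\alpha_r$ and the variance $\sigma_r^2$ (a direct computation from the displayed formula), and one checks that in every relevant regime $\sigma_r^2\simeq\alpha$. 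Because $\eta_i^{(r)}$ is integer valued with lattice span $1$ (for $r\ge2$ its support contains both $0$ and $1$), Fourier inversion gives the exact formula
\[
\bP(\zeta_m^{(r)}=l)=\frac{1}{2\pi}\int_{-\pi}^{\pi}\phi(t)^m e^{-itl}\,dt .
\]

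Next I would set $l=m\alpha_r+x\,\sigma_r\sqrt m$ and substitute $t=u/(\sigma_r\sqrt m)$, which turns the inversion integral into
\[
\bP(\zeta_m^{(r)}=l)=\frac{1}{2\pi\sigma_r\sqrt m}\int_{|u|\le\pi\sigma_r\sqrt m}\Big[\phi\big(\tfrac{u}{\sigma_r\sqrt m}\big)\,e^{-iu\alpha_r/(\sigma_r\sqrt m)}\Big]^m e^{-iux}\,du .
\]
The prefactor already produces the $\tfrac{1}{\sigma_r\sqrt{2\pi m}}$ of the conclusion, so it remains to prove that the integral converges to $\int_{\mathbb{R}}e^{-u^2/2}e^{-iux}\,du=\sqrt{2\pi}\,e^{-x^2/2}$, uniformly for $x$ in a finite interval. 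On the central range $|t|\le\delta$ I would Taylor-expand $\log\phi(t)-it\alpha_r=-\tfrac12\sigma_r^2t^2+O(\mu_3|t|^3)$, where $\mu_3$ is the third absolute central moment; after rescaling the quadratic term contributes exactly $-u^2/2$, while the cubic remainder has size $\tfrac{\mu_3}{\sigma_r^3}\tfrac{|u|^3}{\sqrt m}$. This is precisely where the hypotheses enter: since $\mu_3/\sigma_r^3\lesssim\alpha^{-1/2}$ when $\alpha$ is small (the law is then nearly $\mathrm{Poisson}(\alpha)$) and $\mu_3/\sigma_r^3\lesssim1$ when $\alpha$ is large (the law is quasi-Gaussian), the remainder is $O\big(|u|^3(\,(\alpha m)^{-1/2}+m^{-1/2}\,)\big)$, which tends to $0$ exactly because $m\to\infty$ and $\alpha m\to\infty$. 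A uniform Gaussian majorant $|\phi(t)|^m\le e^{-c u^2}$ on this range then lets me pass to the limit by dominated convergence.

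The remaining, and genuinely delicate, ingredient is the tail bound on $\delta\le|t|\le\pi$, where I must show $\int|\phi(t)|^m\,dt=o\big((\sigma_r\sqrt m)^{-1}\big)$. Here I would establish a single uniform modulus estimate of the form
\[
|\phi(t)|\le \exp\!\big(-c\,\sigma_r^2(1-\cos t)\big),\qquad |t|\le\pi ,
\]
valid for all $\alpha>0$ and $r\ge2$ with an absolute constant $c>0$; this same bound also supplies the majorant $e^{-cu^2}$ used above. For large $\alpha$ it comes straight from $|e^{\alpha(e^{it}-1)}|=e^{-2\alpha\sin^2(t/2)}$ together with the smallness of $p_r$; for bounded or small $\alpha$ it follows from the two atoms of $\eta_i^{(r)}$ at $0$ and $1$, which give $1-|\phi(t)|^2\gtrsim\alpha(1-\cos t)$. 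Granting this bound, the tail integral is dominated by a Gaussian tail beyond $|u|\gtrsim\sqrt m$, hence negligible against $(\sigma_r\sqrt m)^{-1}$ once $m\to\infty$. Combining the two ranges yields the asserted asymptotic.

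The \textbf{main obstacle} is the uniformity in the varying parameter $\alpha$: every estimate — the Taylor control on the central range and the modulus bound on the tail — must hold with constants independent of $\alpha$, so that the single pair of hypotheses $m\to\infty$, $\alpha m\to\infty$ simultaneously covers the degenerate regime $\alpha\to0$ (where $\zeta_m^{(r)}$ is essentially $\mathrm{Poisson}(m\alpha)$ and the assumption $\alpha m\to\infty$ is what drives the local CLT) and the quasi-Gaussian regime $\alpha\to\infty$. This is exactly the point where a naive fixed-distribution local CLT breaks down and the triangular-array refinement, tracking $m$ and $\alpha$ together, becomes necessary.
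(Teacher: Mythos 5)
The paper does not actually prove this statement: it is imported wholesale from Kolchin's monograph \cite{kolchin78}, and the paper only spells out afterwards what the uniformity means. So there is no internal argument to compare against; what you propose --- Fourier inversion for a lattice triangular array, a Taylor expansion of $\log\phi$ on a central range with third-moment control, and a uniform modulus bound on $\delta\le|t|\le\pi$ exploiting the surviving atoms at $0$ and $1$ --- is exactly the classical characteristic-function proof of such local limit theorems, and it is the method of the cited source. Your identification of the crux is also right: the hypotheses $m\to\infty$ and $\alpha m\to\infty$ enter precisely to kill the rescaled cubic term $\frac{\mu_3}{\sigma_r^3}\frac{|u|^3}{\sqrt m}\lesssim|u|^3(\alpha m)^{-1/2}$ uniformly over the two regimes (quasi-Poisson $\alpha\to0$, quasi-Gaussian $\alpha$ large), and your tail bound is provable as indicated: for small $\alpha$, writing $q_0,q_1$ for the masses of $\eta_i^{(r)}$ at $0$ and $1$, one has $1-|\phi(t)|^2\ge 2q_0q_1(1-\cos t)\gtrsim\alpha(1-\cos t)$, while for large $\alpha$ it follows from $|e^{\alpha(e^{it}-1)}|=e^{-2\alpha\sin^2(t/2)}$ together with the smallness of $p_r$.

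One concrete correction, though. With the definitions as reproduced in the paper, $\sigma_r^2$ is \emph{not} the variance of $\eta_i^{(r)}$: a direct computation gives
\[
\operatorname{Var}\bigl(\eta_i^{(r)}\bigr)=\frac{\alpha(1-p_r)-p_r(\alpha-r)^2}{(1-p_r)^2}=\frac{\sigma_r^2}{1-p_r},
\]
so your assertion that the second cumulant of $\log\phi$ at $t=0$ equals $\sigma_r^2$ is off by the factor $1-p_r$, and the Gaussian your inversion argument produces carries the \emph{true} standard deviation, not the stated $\sigma_r$. Since $p_r\le r^re^{-r}/r!\le(2\pi r)^{-1/2}$ is bounded away from $1$, this does not affect your working estimate $\sigma_r^2\simeq\alpha$, but in a $(1+o(1))$-precision local theorem a constant-factor mismatch in the standard deviation is fatal unless $p_r\to0$. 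In the only regime where the paper uses the theorem (Corollary \ref{variation_kolc}, where $\alpha\to0$ and hence $p_r\to0$) the two normalizations agree asymptotically and your argument delivers what is needed; for $\alpha$ bounded away from $0$ you should check Kolchin's exact definition of $\sigma_r^2$ (the transcription in the paper may have lost a power of $1-p_r$) and match your central-range expansion to that normalization before claiming the statement verbatim.
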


The precise statement of the theorem is the following. Fixed $r\geq 2$ and $M>0$ and consider the domain of parameters 
\[\cD(M,r):=\{(m,\alpha, l) \in \bN\times (0,\infty) \times \bN : \bigg|\frac{l-m \alpha_r}{\sigma_r \sqrt{m}} \bigg| \leq M  \}. \]
Then, for every $\varepsilon > 0 $, there exists $C_0=C(\varepsilon, M ,r) >0 $ such that 
\[ \bigg|  \bP(\zeta_m^{(r)}=l) \sigma_r\sqrt{2\pi m } \, e^{\frac{(l-m\alpha_r)^2}{2m\sigma_r^2}} - 1 \bigg| < \varepsilon,\]
for all $(m,\alpha,l) \in \cD(M,r) $ such that $m,\alpha m > C_0$.\\
As a Corollary, we prove the following variation of \cite[Theorem 3, p.67]{kolchin78}.

\begin{cor} \label{variation_kolc}
    Suppose that $\alpha \to 0$ and $Np_r \to 0$ for $n,N \to \infty$. Then for fixed $r \geq 2$,
    $$
    \lim_{n,N \to \infty}\frac{\bP(\mu_r(n,N)=1)}{N p_r}=1.
    $$
\end{cor}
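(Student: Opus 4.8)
The plan is to start from the exact identity in Lemma \ref{kolc_lem1} taken at $k=1$, which reads
$$\bP(\mu_r(n,N)=1)=N p_r(1-p_r)^{N-1}\,\frac{\bP(\zeta_{N-1}^{(r)}=n-r)}{\bP(\zeta_N=n)},$$
so that the quantity to be controlled is
$$\frac{\bP(\mu_r(n,N)=1)}{Np_r}=(1-p_r)^{N-1}\,\frac{\bP(\zeta_{N-1}^{(r)}=n-r)}{\bP(\zeta_N=n)}.$$
I would then show separately that each of the two factors on the right tends to $1$. For the first factor, since $Np_r\to 0$ forces both $p_r\to 0$ and $(N-1)p_r\to 0$, the expansion $\log(1-p_r)=-p_r+O(p_r^2)$ gives $(N-1)\log(1-p_r)=-(N-1)p_r+o(1)\to 0$, hence $(1-p_r)^{N-1}\to 1$.

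For the second factor, the denominator is available exactly: $\zeta_N$ is a sum of $N$ independent $\mathrm{Poisson}(\alpha)$ variables, hence $\mathrm{Poisson}(n)$, so $\bP(\zeta_N=n)=n^n e^{-n}/n!$, and Stirling's formula yields $\bP(\zeta_N=n)\sim(2\pi n)^{-1/2}$. For the numerator I would invoke Theorem \ref{kolc_thm1} with $m=N-1$ and $l=n-r$. The hypotheses $m\to\infty$ and $\alpha m=(N-1)\alpha=n-\alpha\to\infty$ hold because $n\to\infty$. The crucial point is to verify that $(m,\alpha,l)\in\cD(M,r)$, for which I compute $l-m\alpha_r$ exactly: using $(N-1)\alpha=n-\alpha$ one finds
$$l-m\alpha_r=\frac{(\alpha-r)-(n-r)p_r+r(N-1)p_r}{1-p_r}.$$
Since $np_r=\alpha\cdot Np_r\to 0$ and $(N-1)p_r\to 0$, the numerator converges to $-r$, so $l-m\alpha_r$ stays bounded. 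Meanwhile, because $r\ge 2$ the quantity $\tfrac{(\alpha-r)^2}{\alpha}p_r=\tfrac{(\alpha-r)^2\alpha^{r-1}e^{-\alpha}}{r!}\to 0$, whence $\sigma_r^2\sim\alpha$ and $m\sigma_r^2\sim n\to\infty$; therefore $\sigma_r\sqrt m\to\infty$ and the normalized deviation $(l-m\alpha_r)/(\sigma_r\sqrt m)\to 0$, so it lies in any interval $[-M,M]$ eventually, placing the parameters in $\cD(M,r)$.

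With Theorem \ref{kolc_thm1} now applicable, the exponent $\tfrac{(l-m\alpha_r)^2}{2m\sigma_r^2}$ has bounded numerator and denominator $\sim n$, hence tends to $0$, so the Gaussian factor tends to $1$; moreover $\sigma_r\sqrt{2\pi m}=\sqrt{2\pi\,m\sigma_r^2}\sim\sqrt{2\pi n}$. Consequently $\bP(\zeta_{N-1}^{(r)}=n-r)\sim(2\pi n)^{-1/2}$, which matches the denominator, so the second factor tends to $1$ and the corollary follows. The main obstacle is the asymptotic bookkeeping for $\alpha_r$ and $\sigma_r$ as $\alpha,p_r\to 0$: precisely, confirming that $l-m\alpha_r$ remains bounded while $\sigma_r\sqrt m$ diverges, which is exactly what legitimizes the local limit theorem and forces the Gaussian factor to be asymptotically $1$.
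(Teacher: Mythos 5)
Your proposal is correct and follows essentially the same route as the paper's own proof: the exact identity from Lemma \ref{kolc_lem1} at $k=1$, the algebraic computation showing $l-m\alpha_r=\frac{rNp_r-np_r-r+\alpha}{1-p_r}$ stays bounded (indeed tends to $-r$) while $\sigma_r^2\sim\alpha$ gives $\sigma_r\sqrt{m}\sim\sqrt{n}$, then Theorem \ref{kolc_thm1} for the numerator, Stirling for the Poisson denominator, and $(1-p_r)^{N-1}\to 1$. Your treatment is, if anything, slightly more careful than the paper's in verifying $np_r=\alpha\cdot Np_r\to 0$ and the hypotheses $m\to\infty$, $\alpha m\to\infty$ explicitly.
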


\begin{proof}
    From Lemma \ref{kolc_lem1} we have that 
    \begin{equation} \label{lem_used}
    \bP(\mu_r(n,N)=1)=Np_r(1-p_r)^{N-1} \frac{\bP(\zeta_{N-1}^{(r)}=n-r)}{\bP(\zeta_N=n)}.
    \end{equation}
    We want to use Theorem \ref{kolc_thm1} with $m=N-1$ and $l=n-r$ to estimate $\bP(\zeta_{N-1}^{(r)}=n-r)$. The only hypothesis we have to check is if $\frac{l-m\alpha_r}{\sigma_r\sqrt{m}}$ remains bounded with our choices, while the other hypothesis easily hold. Notice that since $\alpha \to 0$ we can assume without loss of generality that $ \alpha < 1$. We have
    \begin{equation} \label{hyp_bounded}
    \frac{l-m\alpha_r}{\sigma_r\sqrt{m}} = \frac{n-r-(N-1)\alpha_r}{\sigma_r\sqrt{(N-1)}}.
    \end{equation}
    Since $p_r \sim_r \alpha^r$ we have that $\sigma_r^2 \sim_{r} \alpha$ and so $\sigma_r\sqrt{(N-1)} \sim_{r} \sqrt{n}$.
    Furthermore
    $$
    n-r-(N-1)\alpha_r = n-r-(N-1)\frac{\alpha-rp_r}{1-p_r} = \frac{rNp_r-np_r-r+\alpha}{1-p_r}
    $$
    which remains bounded since $Np_r \to 0$ and $r$ is fixed. So we have obtained that \eqref{hyp_bounded} goes to $0$ for $n,N\to \infty$ and so it remains bounded. We can finally apply Theorem \ref{kolc_thm1}:
    $$
    \bP(\zeta_{N-1}^{(r)}=n-r) = \frac{1}{\sigma_r\sqrt{2\pi (N-1)}}e^{-\frac{(n-r-(N-1)\alpha_r)^2}{2(N-1)\sigma_r^2}}(1+o(1)) \sim_{r} \frac{1}{\sqrt{2\pi n}}.
    $$
    Since $\zeta_N$ follows a Poisson distribution, using  Stirling formula we have
    $$
    \bP(\zeta_N=n) = \frac{n^n}{n!}e^{-n} \sim \frac{1}{\sqrt{2\pi n}}.
    $$
    Finally,  we have that $(1-p_r)^{N-1} = [(1-p_r)^{\frac{1}{p_r}  }]^{p_r (N-1)}  \sim_r 1$. The result now follows from  \eqref{lem_used}.
\end{proof}

\subsection{ Random Separated Sequences in the Polydisc}

We shall now show how Corollary \ref{variation_kolc} can be applied to prove Theorem \ref{Steinhaus_separation}:

\begin{proof}[Proof of Theorem \ref{Steinhaus_separation}]

For all $m$ in $\N^d$, divide the $d$-dimensional torus $\T^d$ into $2^{|m|}$ dyadic rectangles $\{R^m_j\,|\, j_i=1, \dots, 2^{m_i}, i=1,\dots, d\}$ of side-lengths $2^{-m_1}, \dots, 2^{-m_d}$, i.e.
\[
R^m_j:=\{x=(x_1, \dots, x_d)\in[0, 1)^d\,|\, (j_i-1)/2^{m_i}\leq x_i<j_i/2^{m_i}\},\quad j_i=1,\dots, 2^{m_i}.
\]

Let's also label  $\{\lambda_{m, 1}, \dots, \lambda_{m, N_m}\}$ the random points in $A_m$. For any $I\subseteq \{1, \dots, N_m\}$, let $\Omega(m, I)$ be the event that the arguments of the points $\{\lambda_{{m, i}}\,|\, i\in I\}$ are in the same dyadic rectangle $R^n_{j_0}$. Define
\[
\Omega_{m, M}:=\bigcup_{|I|=M+1}\Omega(m, I)
\]
as the event that $M+1$ of the $N_m$ random arguments of points in $A_m$ fall into the same dyadic rectangle. Moreover, the $\theta_i$ are independent and identically distributed, so the probability of each $\Omega(m, I)$ is equal to $2^{-m(|I|-1)}$.\\

Suppose $\sum_{m\in\N^d} N_m^{1+M} 2^{-|m|M} < \infty$ for some $M \geq 1$.
Since for every $M \in \bN$, $$(M+1)! \geq \frac{(M+2)^{M+1}}{e^{M+1}}$$ then
$$
\binom{N_m}{M+1} \leq \frac{N_m^{M+1}}{(M+1)!} \leq \left( \frac{e}{M+2} \right)^{M+1} N_m^{M+1}.
$$
So we finally obtain
\begin{equation}
\label{eqn:borelcantelli}
\bP(\Omega_{m,M}) \leq \binom{N_m}{M+1} 2^{-|m|M} \leq \left( \frac{e}{M+2} \right)^{M+1} N_m^{M+1} 2^{-|m|M}.
\end{equation}

 This, in principle, doesn't exclude the possibility that many random arguments are  very close to each other with high probability, even if they belong to different rectangles. To take care of such eventuality, one can just shift the rectangles by $2^{-m_i-1}$ modulo $1$ in each direction and repeat the above argument. Therefore \eqref{eqn:borelcantelli} controls the probability that $M+1$ arguments of the points in $A_m$ belongs to a rectangle (non necessarily dyadic) of side-lengths $2^{-m_i}$, $i=1, \dots, d$. Moreover, two points in $A_m$ whose arguments are not in the same rectangle of such dimensions are at a uniform mutual pseudo-hyperbolic distance, hence thanks to  Borel-Cantelli's Lemma, $\Lambda$ can be almost surely partitioned into $M$ weakly separated sequences.

Now suppose $\sum_{m\in\N^d} N_m^{M+1} 2^{-|m|M} = \infty$. We will show that $\Lambda$ is not almost surely the union of $M$ weakly separated sequences by showing that for all $l$ in $\N$, almost surely there are infinitely many clusters of $M+1$ points in $\Lambda$ in the same pseudo-hyperbolic ball of radius $2^{-l}$. Fix $l$ in $\N$, and given $m$ in $\N^d$ divide $A_m$ into $2^{dl}$ regions by refining the dyadic partition that defines $A_m$ $l$ times in each direction. Namely,
\[
A_m^j:=\left\{z\in A_m\,\bigg|\, 2^{-m_i+1}+\frac{j-1}{2^{m_i-1+l}}\leq1-|z^i|<2^{-m_i+1}+\frac{j}{2^{m_i-1+l}}, i=1,\dots, d\right\},\, j_i=1, \dots, 2^l.
\]

Since $A_m$ contains $N_m$ points of $\Lambda$, then there exists one $A^j_m$, say $J_m$, that contains at least $L_m\geq N_m/2^{dl}$ points of $\Lambda$. If $m\oplus l:=(m_1+l,\dots, m_d+l)$, then every point in $\D^d$ whose radii is in $J_m$ and arguments are in the same dyadic rectangle $R^{m\oplus l}_k$ are in a ball of pseudo-hyperbolic distance comparable to $2^{-l}$. Therefore, we need to apply Corollary \ref{variation_kolc} to $\mu_{M}(N,n)$, with $N=2^{|m|+dl}$ and $n=L_m$. By eventually removing some radii from the sequence $(r_m)_{m\in\N^d}$, we can assume without loss of generality that $N_m2^{-|m|}\underset{|m|\to\infty}{\to}0$, while $\sum_{m\in\N^d} N_m^{1+M}2^{-M|m|}$ is still divergent (clearly if the associated random sequences is not the union of $M$ weakly separated sequences almost surely, so it won't be the one associated to the whole sequenced $(r_m)_{m\in\N^d}$). In this setting, using the notations of Corollary \ref{variation_kolc}
\[
\alpha=\alpha_n\leq N_m2^{-|m|-dl}\underset{|m|\to\infty}{\to}0,\quad\qquad p_{M}=p_{M, m}\underset{|m|\to\infty}{\sim}\alpha_m^{M}/M!\underset{|m|\to\infty}{\to}{0},
\]
thus 
\[
\p(\mu_{M}(2^{|m|+dl}, L_m)=1)\underset{|m|\to\infty}{\sim_M}L_m^{1+M}2^{-M|m|}\geq N_m^{1+M}2^{-M|m|-dl(1+M)}.
\]
Thanks to Borel-Cantelli Lemma and the divergence of the series $\sum_{n\in\N^d} N_m^{1+M}2^{-M|m|}$, we obtain that almost surely infinitely many of the regions $(A_m)_{m\in\N^d}$ contain a cluster of $M+1$ points in a pseudo-hyperbolic ball of radius comparable to $2^{-l}$. Since this is true for all $l$, we conclude by taking an intersection of countably many events of probability $1$.

\end{proof}
In particular, this gives the 0-1 for $\Lambda$ to be the finite union of weakly separated sequences almost surely, which for the sake of completeness we formulate via the following three equivalent conditions:
\begin{cor}
\label{cor:stein:separation}
Let $\Lambda$ be a random sequence in $\D^d$. Then the following are equivalent:
\begin{description}
\item[(i)] $\Lambda$ can be partitioned almost surely into finitely many weakly separated sequences,
\item[(ii)] There exists an $M$ in $\N$ such that 
\[
\underset{m\in\N^d}{\sum}N_m^{1+M}2^{-M|m|}<\infty,
\]
\item[(iii)] There exists an $\varepsilon>0$  such that $$N_m\lesssim 2^{(1-\varepsilon)|m|},$$
\item[(iv)] There exists some $\beta>1$ such that
\[
\sum_{m\in\N^d}N_m^{\beta}2^{-|m|}<\infty.
\]
\end{description}
\end{cor}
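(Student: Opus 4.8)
The plan is to treat the equivalence (i) $\Leftrightarrow$ (ii) as a soft consequence of Theorem \ref{Steinhaus_separation}, and then dispatch the purely analytic equivalences (ii) $\Leftrightarrow$ (iii) $\Leftrightarrow$ (iv) by elementary estimates on sums indexed by $\N^d$. For (i) $\Leftrightarrow$ (ii), I would first observe that the event $E_M:=\{\Lambda\text{ is the union of }M\text{ separated sequences}\}$ is increasing in $M$, since a partition into $M$ pieces is also a partition into $M+1$ pieces after adjoining an empty sequence, whence $E_M\subseteq E_{M+1}$. The event in (i) is then the increasing union $\bigcup_{M\in\N}E_M$, and by continuity from below $\p\left(\bigcup_M E_M\right)=\lim_M\p(E_M)=\sup_M\p(E_M)$. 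By Theorem \ref{Steinhaus_separation} each $\p(E_M)$ is either $0$ or $1$, and it equals $1$ precisely when $\sum_{m}N_m^{1+M}2^{-M|m|}<\infty$. Hence the union has probability $1$ if and only if this series converges for some $M$, which is exactly (ii).

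For the analytic part I would isolate two elementary facts: (a) a convergent series of nonnegative terms indexed by $\N^d$ has bounded general term, so the supremum over $m$ of that term is finite; and (b) the geometric sum $\sum_{m\in\N^d}2^{-\delta|m|}$ factorizes as $\left(\sum_{k\ge0}2^{-\delta k}\right)^d$ and therefore converges if and only if $\delta>0$. Applying (a) to (ii) gives $N_m^{1+M}\lesssim 2^{M|m|}$, i.e. $N_m\lesssim 2^{(1-\varepsilon)|m|}$ with $\varepsilon=1/(M+1)$, yielding (ii) $\Rightarrow$ (iii); applying (a) to (iv) gives $N_m\lesssim 2^{|m|/\beta}=2^{(1-\varepsilon)|m|}$ with $\varepsilon=1-1/\beta$, yielding (iv) $\Rightarrow$ (iii).

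Conversely, I would insert the bound in (iii) into the general terms and use (b). For (iii) $\Rightarrow$ (ii), the term $N_m^{1+M}2^{-M|m|}$ is $\lesssim 2^{(1-\varepsilon(M+1))|m|}$, which is summable as soon as $M>1/\varepsilon-1$, so any integer $M$ beyond that threshold works. For (iii) $\Rightarrow$ (iv), the term $N_m^{\beta}2^{-|m|}$ is $\lesssim 2^{(\beta(1-\varepsilon)-1)|m|}$, whose exponent is negative for every $\beta\in\left(1,1/(1-\varepsilon)\right)$, a nonempty interval since $1/(1-\varepsilon)>1$. This closes the cycle of implications among (ii), (iii), (iv), and together with the probabilistic step above completes the equivalence of all four conditions.

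I expect no serious obstacle here: the entire substantive content is carried by Theorem \ref{Steinhaus_separation}, and what remains is bookkeeping. The one step deserving genuine attention is the passage from ``partitioned into finitely many'' to ``partitioned into $M$ for a fixed $M$'' in (i) $\Leftrightarrow$ (ii), where one must combine the monotonicity $E_M\subseteq E_{M+1}$ with the $0$--$1$ dichotomy of the individual events, rather than invoke Kolmogorov's law directly on the union. Every other implication reduces to choosing $\varepsilon$, $M$, or $\beta$ so that a single geometric exponent over $\N^d$ has the correct sign.
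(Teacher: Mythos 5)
Your proposal is correct and coincides with the paper's own (implicit) derivation: the paper states this corollary without separate proof as an immediate consequence of Theorem \ref{Steinhaus_separation}, which is exactly how you proceed, obtaining (i) $\Leftrightarrow$ (ii) by combining the monotonicity of the events $E_M$ with the $0$--$1$ dichotomy of each $\p(E_M)$, and the equivalences (ii) $\Leftrightarrow$ (iii) $\Leftrightarrow$ (iv) by elementary exponent comparisons for sums over $\N^d$. Your bookkeeping of the exponents (summability of $2^{-\delta|m|}$ over $\N^d$ iff $\delta>0$, and the choices $\varepsilon=1/(M+1)$, $M>1/\varepsilon-1$, $\beta\in(1,1/(1-\varepsilon))$) is accurate, and your flagged step --- passing from ``finitely many'' to a fixed $M$ via continuity from below rather than invoking Kolmogorov's law on the union --- is indeed the only point requiring care.
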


\section{$\gamma$-Carleson Measures in the Unit Disc and Interpolating Sequences for the Bloch Space}
\label{sec:Bloch}
Let $\gamma \in (0,1)$. A sequence of points $Z:=(z_n)_n \subset \bD$  is a \emph{ $\gamma$-Carleson sequence} if the measure $\mu_{Z,\gamma}:=\sum_n(1-|z_n|^2)^\gamma~\delta_{z_n}$ satisfies the one-box condition
$$
\mu_\gamma(S_I)\lesssim_Z|I|^\gamma.
$$

In \cite[Theorem 1.4]{chalmoukis22} the authors found the $0-1$ law for a random sequence $\Lambda$ in the unit disc to satisfy the $\gamma$ - Carleson condition:

\begin{thm} \label{alpha_Carleson}
    Let $\Lambda$ be a random sequence in $\D$ and let be $\gamma \in (0,1)$. Then,
    $$
    \p(\Lambda \text{ is $\gamma$-Carleson})=\begin{dcases}
    1\quad\text{if}\quad&\sum_{m \in \bN} N_m 2^{-\gamma m}<\infty,\\
    0\quad\text{if}\quad&\sum_{m \in \bN} N_n 2^{-\gamma m}=\infty.
\end{dcases}.
    $$
\end{thm}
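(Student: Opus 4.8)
The plan is to prove the two implications separately: the divergent case is essentially deterministic, while the convergent case rests on a concentration estimate for weighted sums of independent binomials.

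First I would dispose of the necessity (the probability–$0$ case). Testing the one–box condition against arcs $I$ exhausting the whole circle forces $\mu_{\gamma}(\D)=\sum_n(1-|\lambda_n|^2)^\gamma<\infty$ whenever $\Lambda$ is $\gamma$–Carleson. But $|\lambda_n|=r_n$ does not depend on the random arguments, so
\[
\mu_\gamma(\D)=\sum_n(1-r_n^2)^\gamma\simeq\sum_{m\in\N}N_m\,2^{-\gamma m}
\]
\emph{deterministically}. Hence if $\sum_m N_m 2^{-\gamma m}=\infty$ the measure has infinite mass and the one–box condition fails surely, giving probability $0$.

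For the sufficiency assume $S:=\sum_m N_m2^{-\gamma m}<\infty$. By the usual reduction it suffices to verify $\mu_\gamma(S_I)\lesssim|I|^\gamma$ for dyadic arcs, since any arc is covered by boundedly many dyadic arcs of comparable length. Fix a generation $m_0$ and a dyadic arc $I$ with $|I|=2^{-m_0}$; only points at levels $j\ge m_0$ lie in $S_I$, and $(1-|z|^2)^\gamma\simeq 2^{-\gamma j}$ for $z\in A_j$, so
\[
2^{\gamma m_0}\mu_\gamma(S_I)\simeq Y_I:=\sum_{j\ge m_0}2^{-\gamma(j-m_0)}X_j,\qquad X_j:=\#\{n:\lambda_n\in A_j,\ \arg\lambda_n\in I\}.
\]
Because the arguments are i.i.d.\ uniform, the $X_j$ are independent with $X_j\sim\mathrm{Bin}(N_j,2^{-m_0})$, whence $\E[Y_I]=2^{-m_0}\sum_{j\ge m_0}2^{-\gamma(j-m_0)}N_j\le 2^{(\gamma-1)m_0}S$, which decays geometrically in $m_0$.

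The heart of the matter, and the step I expect to be the main obstacle, is an exponential tail bound on $Y_I$ strong enough to survive the union bound over the $2^{m_0}$ dyadic arcs of generation $m_0$: a first–moment (Markov) bound is not enough, since it only gains $2^{(\gamma-1)m_0}$ against the entropy factor $2^{m_0}$. Using independence and $\E e^{sX_j}\le\exp(N_j2^{-m_0}(e^s-1))$ gives, for $t>0$ and $M_j:=N_j2^{-\gamma j}$,
\[
\p(Y_I>C)\le e^{-tC}\exp\!\Big(2^{(\gamma-1)m_0}\sum_{k\ge0}M_{m_0+k}\,2^{\gamma k}\big(e^{t2^{-\gamma k}}-1\big)\Big).
\]
The geometric weight $2^{-\gamma k}$ together with $\sum_j M_j=S<\infty$ is precisely what lets me take $t=a\,m_0$ with $0<a<(1-\gamma)\ln 2$: splitting the inner sum at the scale $2^{\gamma k}\simeq t$, the tail $k$'s contribute $\lesssim t\,S$ and the finitely many small $k$'s at most $\lesssim m_0\log m_0\, e^{t}$, so the whole exponent tends to $0$ as $m_0\to\infty$, the prefactor $2^{(\gamma-1)m_0}$ beating the polynomial and $e^{t}=2^{a m_0/\ln 2}$ factors exactly because $a<(1-\gamma)\ln2$. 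Thus $\p(Y_I>C)\le e^{-a m_0 C+o(1)}$, and choosing $C$ large (depending only on $\gamma$) makes $2^{m_0}\p(Y_I>C)$ summable in $m_0$. By Borel–Cantelli, $Y_I\le C$ fails for only finitely many dyadic boxes almost surely; the finitely many exceptional low generations are harmless because $\mu_\gamma(S_I)\le S\lesssim|I|^\gamma$ there by finiteness of the total mass. Hence $\sup_I\mu_\gamma(S_I)/|I|^\gamma<\infty$ almost surely, and after the dyadic reduction the one–box condition holds for every arc, i.e.\ $\Lambda$ is $\gamma$–Carleson with probability $1$.
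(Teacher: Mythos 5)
You should first note that this theorem has no proof in the present paper at all: it is quoted verbatim from \cite[Theorem 1.4]{chalmoukis22} (a result of the first two authors), so there is no internal argument to compare yours against, and your write-up is in effect a self-contained reproof. As such, it is correct. The necessity half is indeed deterministic, since $\mu_\gamma(\mathbb{D})=\sum_n(1-r_n^2)^\gamma\simeq\sum_m N_m2^{-\gamma m}$ involves only the radii, and finite total mass is forced by the one-box condition tested on a bounded cover of $\mathbb{T}$. For sufficiency, your reduction to $Y_I=\sum_{k\ge0}2^{-\gamma k}X_{m_0+k}$ with \emph{independent} $X_j\sim\mathrm{Bin}(N_j,2^{-m_0})$ is sound (points in distinct annuli carry distinct, independent arguments), and the Chernoff step checks out: writing $M_j=N_j2^{-\gamma j}$ with $\sum_j M_j=S<\infty$ and taking $t=am_0$ with $0<a<(1-\gamma)\ln 2$, the indices with $t2^{-\gamma k}\le 1$ contribute $O\bigl(2^{(\gamma-1)m_0}m_0 S\bigr)$ via $e^x-1\lesssim x$, while the $O(\log m_0)$ remaining indices contribute $O\bigl(2^{(\gamma-1)m_0}m_0\log m_0\,e^{am_0}\bigr)$ using $M_j\le S$; both vanish exactly because $a<(1-\gamma)\ln 2$, so $\mathbb{P}(Y_I>C)\le e^{-aCm_0(1+o(1))}$, and $aC>\ln 2$ defeats the $2^{m_0}$ entropy of the union bound, after which Borel--Cantelli and the standard dyadic covering finish the proof. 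It is worth observing that your threshold $C\gtrsim 1/(1-\gamma)$ is the correct heuristic one: a single box of generation $m_0$ receiving $K$ points of comparable level has probability $\simeq(N_{m_0}2^{-m_0})^K$, and summability against $2^{m_0}$ boxes requires precisely $K>1/(1-\gamma)$, so the MGF route is not wasteful.

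Two cosmetic points you should patch, neither a genuine gap. First, with the paper's conventions the square $S_I$, $|I|=2^{-m_0}$, meets the annulus $A_{m_0-1}$ at the radius $1-|z|=2^{-m_0}$ exactly, so your sum should start at $j\ge m_0-1$; all estimates survive with an extra factor $2^{\gamma}$. Second, after Borel--Cantelli the last exceptional generation $M(\omega)$ is random, so when you say the low generations are ``harmless because $\mu_\gamma(S_I)\le S\lesssim |I|^\gamma$'' the implicit constant is $S\,2^{\gamma M(\omega)}$, i.e.\ random; this is perfectly admissible, since the one-box constant may depend on $\omega$, but the dependence should be acknowledged.
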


As an application of Theorem \ref{alpha_Carleson} and Theorem \ref{Steinhaus_separation}, we give the $0-1$ law for random interpolating sequences for the Bloch space on the unit disc.

A holomorphic function $f:\bD \to \bC$ belongs to the Bloch space $\cB$ if 
$$
\|f\|_\cB = |f(0)| + \sup_{z \in \bD}(1-|z|^2)|f'(z)| < \infty.
$$
We are going to consider interpolating sequences for $\cB$ as described in \cite{Boe04}, where $Z=(z_n)_n$ is said to be interpolating for the Bloch space if for every collection of values $(a_n)_n$ such that $$\sup_{n \neq m}\frac{|a_n-a_m|}{\beta(a_n,a_m)} < \infty$$ there exists a function $f \in \cB$ such that $f(z_n)=a_n$, where $\beta$ is the hyperbolic distance in $\bD$.
This choice for the trace space is motivated by the fact that if $f \in \cB$ then  $|f(z)-f(w)| \leq \|f\|_{\cB}\beta(z,w)$. In \cite{Boe04} B{\o}e and Nicolau characterized such interpolating sequences:

\begin{thm}
\label{thm:boenicolau}
    A sequence of points $Z$ in the unit disc is interpolating for the Bloch space if and only if it can be expressed as a union of at most two separated sequences and there exist $0<\gamma<1$ and $C>0$ such that
        \begin{equation}
        \label{eqn:boe_nicolau_gamma}
        \#\{n \in \N : \rho(z,z_n) < r\} \leq \frac{C}{(1-r)^\gamma}.
        \end{equation}
        for all $z\in\D$.
        \end{thm}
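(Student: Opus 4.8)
\emph{The plan} is to prove both implications by studying the restriction operator $R\colon\cB\to T(Z)$, $Rf=(f(z_n))_n$, where $T(Z)$ is the trace space of sequences $(a_n)_n$ with $\sup_{n\ne m}|a_n-a_m|/\beta(z_n,z_m)<\infty$. Since every $f\in\cB$ satisfies $|f(z)-f(w)|\le\|f\|_\cB\,\beta(z,w)$, the map $R$ does land in $T(Z)$, and $Z$ is interpolating precisely when $R$ is onto; by the open mapping theorem this yields a uniform interpolation constant $M$, so that every datum of unit trace-norm is the restriction of some $f$ with $\|f\|_\cB\le M$. I would treat necessity and sufficiency separately, reducing local estimates to the origin by exploiting the Möbius invariance of $\beta$, of $\rho$, and of the Bloch seminorm.

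\emph{Necessity.} For the union-of-two-separated condition I would argue by a second-order obstruction. After a Möbius change of variable sending a putative cluster to $0$, a unit-norm Bloch function is affine up to an error $O(M\varepsilon^2)$ on a Euclidean $\varepsilon$-disc, because $(1-|z|^2)|f'(z)|\le M$ bounds $f'$ near $0$ and Cauchy's estimate then controls $f''$. Hence the values of any interpolant at three mutually $\rho$-separated points of scale $\varepsilon$ must be affine up to $O(M\varepsilon^2)$, whereas one can prescribe Lipschitz data whose non-affine part has size $\varepsilon$; for $\varepsilon$ small this is a contradiction, bounding the number of mutually separated points in small pseudo-hyperbolic balls and hence forcing the splitting $Z=Z_1\cup Z_2$ into separated pieces. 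For the counting condition (b) I would invoke the standard principle that bounded interpolation makes the nodes carry a Carleson measure for the ambient space: the lower bound on $R$ dualizes to a Carleson embedding which, unwound geometrically, is exactly $\#\{n:\rho(z,z_n)<r\}\lesssim (1-r)^{-\gamma}$, the exponent $\gamma<1$ reflecting the merely logarithmic growth permitted by the Bloch norm.

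\emph{Sufficiency.} Assuming (a) and (b), I would construct the interpolant by a $\bar\partial$-scheme. Given unit-norm data $(a_n)$, form the smooth interpolant $F=\sum_n a_n\chi_n$, where the $\chi_n$ are bumps on fixed hyperbolic balls about the $z_n$ forming a partition of unity near $Z$; the decomposition $Z=Z_1\cup Z_2$ gives bounded overlap, so that writing $\bar\partial F=\sum_n(a_n-a_{n_0})\bar\partial\chi_n$ near a node $z_{n_0}$ and using the Lipschitz bound on the data yields $(1-|z|^2)|\bar\partial F|\lesssim 1$. Condition (b) is exactly what makes the measure attached to $\bar\partial F$ a Carleson measure for the relevant $\bar\partial$-problem. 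I would then solve $\bar\partial u=\bar\partial F$ by a solution that is simultaneously in $\cB$ and vanishes on $Z$; setting $f=F-u$ gives $f\in\cB$ with $f(z_n)=a_n$.

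\emph{Main obstacle.} The crux is this last step: producing a $\bar\partial$-primitive that lies in the Bloch space \emph{and} vanishes at the nodes. Bare Bloch $\bar\partial$-solvability does not by itself guarantee vanishing on $Z$, so one must correct the solution with a Blaschke-type factor or an adapted weight supported near $Z$, and it is precisely here that the strict inequality $\gamma<1$ together with the two-separated structure drives the convergence and the norm bounds. On the necessity side, the delicate point is upgrading ``boundedly many mutually separated nodes per small ball'' to the \emph{sharp} count of two separated sequences, which genuinely uses the second-order analysis above rather than a soft packing argument.
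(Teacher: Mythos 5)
First, a point of context: the paper does not prove this statement at all --- it is quoted verbatim from B{\o}e and Nicolau \cite{Boe04}, so your proposal has to be measured against their proof, which is a direct construction with explicit building-block functions and a stopping-time iteration, not a $\bar\partial$-scheme. Your necessity argument for the union-of-two-separated-sequences part is sound and close in spirit to the known one: Schwarz--Pick plus Cauchy estimates do show that a Bloch function of norm $M$ is affine up to $O(M\varepsilon^2)$ on a hyperbolic $\varepsilon$-ball, three points at mutual scale $\varepsilon$ admit admissible data deviating from any affine function by $\sim\varepsilon$, and ``at most two points per small ball'' upgrades to a partition into two separated sequences by a degree-one graph coloring (so, contrary to your closing remark, the finishing step \emph{is} soft packing; the analysis is only needed to exclude triples).

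The genuine gaps are in the other two steps. For the necessity of \eqref{eqn:boe_nicolau_gamma} you invoke a ``standard principle that bounded interpolation makes the nodes carry a Carleson measure''; no such principle applies here. That mechanism is a reproducing-kernel/Hilbert-space phenomenon, and $\cB$ is not such a space: the trace space is of Lipschitz type rather than $\ell^\infty$, the predual is Bergman $A^1$, and no formal dualization of the surjectivity of $R$ produces the embedding. Worse, the whole content of \eqref{eqn:boe_nicolau_gamma} is the \emph{strict} exponent $\gamma<1$: separation alone already yields the count $C/(1-r)$, so the theorem lives exactly in the improvement from $1$ to some $\gamma<1$, which in \cite{Boe04} costs a genuinely combinatorial argument (testing with data built along dyadic stopping regions and counting how often a fixed Bloch function can oscillate by a definite amount); ``reflecting the merely logarithmic growth'' is not a proof and the step as written would fail. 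On the sufficiency side, you candidly flag the decisive step --- a $\bar\partial$-solution with Bloch bounds vanishing on $Z$ --- as unresolved, and that is indeed where the scheme breaks: since the Bloch seminorm sees $(1-|z|^2)|(F-u)'|=(1-|z|^2)|\partial F-\partial u|$, an $L^\infty$-type (Jones) solution controlling only $|u|$ is useless, and you additionally need a \emph{global} hyperbolic-Lipschitz extension of the unbounded data $(a_n)$ before $\bar\partial F$ even makes sense off the nodes, a point the partition-of-unity ``near $Z$'' elides. So the proposal correctly reproduces one of the three implications in outline, but both the necessity of the counting condition and the sufficiency direction rest on steps that are not available, and the published proof proceeds by an entirely different, constructive route.
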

As proved in \cite{seip04} and noted also in \cite{Pascuas2007}, condition \eqref{eqn:boe_nicolau_gamma} is equivalent to $Z$ being $\gamma$- Carleson for some $\gamma<1$.\\
The $0-1$ law for random interpolating sequences for the Bloch space reads as follows:
\begin{thm}
    Let $\Lambda$ be a random sequence in $\D$. Then
    $$
    \bP(\Lambda\text{ is interpolating for $\cB$}) = 
    \begin{dcases}
    1 & \text{if} \quad \sum_{m \in \bN} N_m^3 2^{-2m} < \infty, \\
    0 & \text{if} \quad \sum_{n \in \bN} N_m^3 2^{-2m} = \infty.
\end{dcases}
    $$
\end{thm}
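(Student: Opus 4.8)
The plan is to combine the Bøe--Nicolau characterization (Theorem \ref{thm:boenicolau}) with the two $0$--$1$ laws already at our disposal: Theorem \ref{Steinhaus_separation} for finite unions of separated sequences and Theorem \ref{alpha_Carleson} for $\gamma$-Carleson sequences. Recall that, by Theorem \ref{thm:boenicolau} together with the stated equivalence of \eqref{eqn:boe_nicolau_gamma} with the $\gamma$-Carleson condition, a sequence $Z$ is interpolating for $\cB$ if and only if it is simultaneously (a) the union of at most two $\rho$-separated sequences and (b) $\gamma$-Carleson for some $\gamma\in(0,1)$. I would treat the two cases of the dichotomy separately; the matching of exponents is precisely what lets the single condition $\sum_m N_m^3 2^{-2m}$ govern both requirements.

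In the convergent case $\sum_{m\in\N} N_m^3 2^{-2m}<\infty$, I would first apply Theorem \ref{Steinhaus_separation} with $d=1$ and $M=2$ --- for which the summability condition reads exactly $\sum_m N_m^{3}2^{-2m}<\infty$ --- to conclude that $\Lambda$ is almost surely the union of two separated sequences, settling requirement (a). For requirement (b) I would invoke Corollary \ref{cor:stein:separation}, by which the same summability is equivalent to the deterministic growth bound $N_m\lesssim 2^{(1-\varepsilon)m}$ for some $\varepsilon>0$. Fixing any $\gamma\in(1-\varepsilon,1)$ then gives $N_m 2^{-\gamma m}\lesssim 2^{(1-\varepsilon-\gamma)m}$ with $1-\varepsilon-\gamma<0$, so that $\sum_m N_m 2^{-\gamma m}<\infty$; Theorem \ref{alpha_Carleson} then yields that $\Lambda$ is almost surely $\gamma$-Carleson for this fixed $\gamma$. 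Since (a) and (b) each hold with probability one, so does their intersection, and hence $\Lambda$ is almost surely interpolating for $\cB$.

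In the divergent case $\sum_{m\in\N} N_m^3 2^{-2m}=\infty$, Theorem \ref{Steinhaus_separation} with $M=2$ gives at once that $\Lambda$ is almost surely \emph{not} the union of two separated sequences. Since being a union of at most two separated sequences is a necessary condition for interpolation by Theorem \ref{thm:boenicolau}, the event that $\Lambda$ is interpolating for $\cB$ has probability zero, which is the desired conclusion $\p(\Lambda\text{ is interpolating for }\cB)=0$.

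The argument is essentially bookkeeping once the two prior $0$--$1$ laws are granted, so the divergent case is the immediate one. The only point requiring genuine care is that a \emph{single} fixed $\gamma<1$ must serve for the whole random sequence in the convergent case, rather than a $\gamma$ that might a priori depend on $\omega$; this is exactly what the deterministic bound from Corollary \ref{cor:stein:separation} supplies, with $\varepsilon$ (and hence $\gamma$) depending only on the radii $(r_m)_m$. Thus the main --- though mild --- obstacle is reconciling the separation exponent $M=2$ with the Carleson exponent $\gamma$, which the equivalence (ii)$\Leftrightarrow$(iii) of Corollary \ref{cor:stein:separation} handles cleanly.
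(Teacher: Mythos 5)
Your proposal is correct and essentially identical to the paper's proof: the divergent case is dispatched by Theorem \ref{Steinhaus_separation} with $M=2$, and the convergent case combines that theorem with Theorem \ref{alpha_Carleson} and the B{\o}e--Nicolau characterization (Theorem \ref{thm:boenicolau}) for a single fixed $\gamma<1$. The only cosmetic difference is that the paper extracts the growth bound $N_m\lesssim 2^{2m/3}$ directly from the boundedness of the terms of the convergent series and takes $\gamma\in(2/3,1)$, whereas you invoke the equivalence (ii)$\Leftrightarrow$(iii) of Corollary \ref{cor:stein:separation} to get $N_m\lesssim 2^{(1-\varepsilon)m}$ --- the same estimate in substance.
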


\begin{proof}
    Suppose $\sum N_m^3 2^{-2m} < \infty$. By Theorem \ref{Steinhaus_separation} we know that $\Lambda$ is almost surely an union of $2$ separated sequences. Furthermore we know that
    $$
    N_m \lesssim 2^{\frac{2}{3}m}.
    $$
    
 Take $2/3  < \gamma < 1$, then 
    $$
    \sum_{m \in \bN} N_n 2^{-\gamma m} \leq \sum_{m \in \bN} 2^{-\left( \gamma-\frac{2}{3} \right)} < \infty,
    $$
    and so by Theorem \ref{alpha_Carleson} we have that $\Lambda$ is almost surely $\gamma$-Carleson for $\gamma > 2/3$. By Theorem \ref{thm:boenicolau}, we can conclude that $\Lambda$ is almost surely an interpolating sequence for the Bloch space.

    Suppose now $\sum N_m^3 2^{-2m} = \infty$. Then by Theorem \ref{Steinhaus_separation} we know that
    $$
    \bP(\Lambda \text{ is the union of $2$ separated sequences})=0,
    $$
    thus $\Lambda$ is almost surely not interpolating for $\cB$.
\end{proof}

\section{Random Carleson Measures in the Polydisc}

\label{sec:stein:carleson}

\subsection{Preliminaries on Gramians}
Let $H$ be a Hilbert space, and let $V=(v_n)_{n\in\N}$ be a sequence in $H$. The associated \emph{restriction map} $R_V\colon H\to\bC^\bN$ is defined as
\[
R_V(h):=\left(\left\langle h, v_n\right\rangle\right)_{n\in\N},\qquad h\in H.
\]
The sequence $V$ is said to be a \emph{ Bessel system} if $R_V$ maps $H$ continuously into $\ell^2$. In particular, 
\[
V\,\text{is a Bessel system}\iff T_V:=(R_V)^*R_V\,\text{is bounded}\iff G_V:=R_V(R_V)^*\,\text{is bounded}.
\]

The operator $T_V\colon H\mapsto H$ is usually referred to as the \emph{frame operator} associated to $V$, and acts as follows:
\[
T_V(h):=\sum_{n\in\N}\langle h, v_n\rangle~v_n,\qquad h\in H.
\]

The operator $G_V\colon\ell^2\to\ell^2$ is the \emph{ Gramian} of the sequence $V$, and, with respect to the standard basis of $\ell^2$, it is represented by the infinite matrix
\[
\left(\langle v_n, v_j\rangle_H\right)_{n, j\in\N}
\]
Let $H=\rkhs_k$ be a reproducing kernel Hilbert space with kernel $k$ on a set $X$, and let $Z=(z_n)_{n\in\N}$ be a sequence in $X$. The measure $$\mu_Z:=\sum_{n\in\N}\|k_{z_n}\|^{-2}~\delta_{z_n}$$
is a Carleson measure if $\rkhs_k$ embeds continuously in $L^2(X, \mu_\Lambda)$. Therefore, $\Lambda$ generates a Carleson measure for $\rkhs_k$ if and only if the sequence of normalized kernels $\left(k_{z_n}/\|k_{z_n}\|\right)_{n\in\N}$ forms a Bessel system in $\rkhs_k$, that is, if and only if the Gram matrix
\[
G_\Lambda:=\left(\left\langle\frac{k_{z_n}}{\|k_{z_n}\|}, \frac{k_{z_j}}{\|k_{z_j}\|}\right\rangle\right)_{n, j\in\N}
\]
defines a bounded operator from $\ell^2$ to itself.\\

The following two Lemmas will be relevant for the proof of Theorem \ref{thm:Carlesonpolydisc}. 

\begin{lem}
\label{lem:overlappingblocks}
Let $\N=\bigcup_{j\in\N} I_j$, where each $I_j$ is finite, and suppose that a sequence $V=(v_n)_{n\in\N}$ in a Hilbert space is such that
\[
\langle v_n, v_k\rangle=0
\]
whenever $n\in I_j$, $k\in I_l$, and $I_j\cap I_l=\emptyset$. Suppose that
\begin{equation}
\label{eqn:finiteoverlaps}
M:=\sup_{j\in\N} \#\{k\,|\, I_k\cap I_j\}<\infty.
\end{equation}
Then
\begin{equation}
    \label{eqn:overlappingblocks}
    \|G_V\|\leq M~\sup_{j}\|G_{V_j}\|,
\end{equation}
where $V_j=(v_n)_{n \in I_j}$.
\end{lem}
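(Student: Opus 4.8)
The plan is to pass from the operator norm of the Gramian to the associated quadratic form and then exploit the block-orthogonality through a colouring argument. First I would record the identity
\[
\|G_V\|=\|T_V\|=\sup_{\|h\|=1}\sum_{n\in\N}|\langle h, v_n\rangle|^2,
\]
which holds because $G_V=R_V R_V^*$ and $T_V=R_V^*R_V$ have the same operator norm and $T_V\geq 0$, so its norm is attained as the supremum of the quadratic form $\langle T_V h,h\rangle=\sum_n|\langle h,v_n\rangle|^2$. Thus it suffices to bound $\sum_{n\in\N}|\langle h, v_n\rangle|^2$ by $M\,(\sup_j\|G_{V_j}\|)\,\|h\|^2$ for every $h$.

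The key step is a colouring of the index family. Consider the graph on the block-indices $\{j\}$ in which $j$ and $l$ are joined whenever $I_j\cap I_l\neq\emptyset$. By \eqref{eqn:finiteoverlaps} each vertex has at most $M-1$ neighbours distinct from itself, so a greedy colouring produces a proper colouring $\N=\bigsqcup_{c=1}^{M}C_c$ with $M$ colours, arranged so that two blocks of the same colour are disjoint. Setting $J_c:=\bigcup_{j\in C_c}I_j$, which is a disjoint union, the family $\{J_c\}_{c=1}^{M}$ covers $\N$, and therefore
\[
\sum_{n\in\N}|\langle h, v_n\rangle|^2\leq\sum_{c=1}^{M}\sum_{n\in J_c}|\langle h, v_n\rangle|^2=\sum_{c=1}^{M}\langle T_{V_{J_c}}h, h\rangle,
\]
the inequality only overcounting indices that lie in several $J_c$.

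Finally I would use the orthogonality hypothesis to control each colour class. Since the blocks $(I_j)_{j\in C_c}$ are pairwise disjoint, the assumption that $\langle v_n, v_k\rangle=0$ for indices in disjoint blocks makes the Gramian $G_{V_{J_c}}$ \emph{block-diagonal} with respect to the partition $J_c=\bigsqcup_{j\in C_c}I_j$. Hence its norm equals the supremum of the norms of its diagonal blocks, so that $\|G_{V_{J_c}}\|=\sup_{j\in C_c}\|G_{V_j}\|\leq\sup_{j}\|G_{V_j}\|$. Combining this with the display above gives $\sum_{n\in\N}|\langle h, v_n\rangle|^2\leq\sum_{c=1}^{M}\|G_{V_{J_c}}\|\,\|h\|^2\leq M\,(\sup_{j}\|G_{V_j}\|)\,\|h\|^2$, and taking the supremum over unit vectors $h$ yields \eqref{eqn:overlappingblocks}.

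I expect the main obstacle to be the colouring step. A direct summation $\sum_j\langle T_{V_j}h,h\rangle$ over all blocks fails, because it has infinitely many terms and so cannot be closed off against a single supremum of block norms. The finite-overlap hypothesis \eqref{eqn:finiteoverlaps} must first be converted into a partition into \emph{finitely many} subfamilies, each of which is an orthogonal sum of blocks; only then does the block-diagonal structure turn the relevant Gramian norm into a supremum rather than a sum.
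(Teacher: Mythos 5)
Your proof is correct, and it takes a genuinely different route from the paper's. The paper argues at the operator level: it first replaces $G_V$ by the sum $\sum_j G_{V_j}$ of the zero-extended block Gramians, via positive semi-definiteness of the difference, and then invokes the Cotlar--Stein almost-orthogonality lemma, using \eqref{eqn:finiteoverlaps} to ensure that for each fixed $j$ the operators $G_{V_l}$ have ranges non-orthogonal to that of $G_{V_j}$ for at most $M$ indices $l$, which bounds the Cotlar sums $C_0, C_\infty$ by $M\sup_j\|G_{V_j}\|$. You instead work with the quadratic form $\langle T_V h, h\rangle=\sum_n|\langle h, v_n\rangle|^2$ and turn \eqref{eqn:finiteoverlaps} into combinatorics: since the count in \eqref{eqn:finiteoverlaps} includes $k=j$, the overlap graph has maximum degree at most $M-1$, and a greedy colouring (which is indeed valid on a countable graph of bounded degree) splits the blocks into at most $M$ families of pairwise disjoint blocks; within each family the orthogonality hypothesis makes $G_{V_{J_c}}$ literally block-diagonal, so its norm is a \emph{supremum} of block norms, while the passage from $\sum_{n\in\N}$ to $\sum_c\sum_{n\in J_c}$ is a trivial overcount of nonnegative scalars. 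What your approach buys is elementarity and robustness of the global step: you need no Cotlar--Stein, and your only comparison between $G_V$ and the blocks is scalar overcounting, which holds for arbitrary overlap patterns, whereas an operator-order comparison of $G_V$ with $\sum_j G_{V_j}$ is more delicate when two indices are linked only through a chain of overlapping blocks without lying in a common one. What the Cotlar--Stein route buys is flexibility in the local hypothesis: it would survive weakening exact orthogonality between disjoint blocks to almost-orthogonality (smallness of $\|G_{V_j}G_{V_l}^*\|$), where your block-diagonality step, which needs exact vanishing of the off-diagonal couplings within each colour class, would break down. Both arguments yield the same constant $M$, and your closing diagnosis is accurate: the essential content of \eqref{eqn:finiteoverlaps} is precisely that the infinite family of blocks can be organized into \emph{finitely many} orthogonal direct sums, which your colouring implements explicitly.
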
 
\begin{proof}
Since
\[
\left(\sum_{j}G_{V_j}\right) - G_V
\]
is positive semi-definite, one has that
\[
\|G_V\|\leq\left\|\sum_{j}G_{V_j}\right\|.
\]

Thanks to Cotlar-Stein Lemma, 
\[
\left\|\sum_{j}G_{V_j}\right\|\leq\sqrt{C_0~C_\infty}, 
\]
where
\[
C_0:=\sup_{j}\sum_{l}\sqrt{\|G_{V_j}G_{V_l}^*}\|,\quad C_\infty:=\sup_{l}\sum_{j}\sqrt{\|G_{V_j}^*G_{V_l}\|}.
\]

Thanks to \eqref{eqn:finiteoverlaps}, fixed any $j$, $G_{V_l}$ and $G_{V_j}$ have not orthogonal ranges at most $M$ times, thus \eqref{eqn:overlappingblocks} holds.

\end{proof}
 In what follows, if $h\in \cH$ is a vector of a Hilbert space we will denote by $  h h^* $ the rank one operator on $\cH $ which acts naturally as follows, $h h^* (x) = \langle x,h \rangle  h, \,\, x\in \cH$. Let $Z=(z_n)_{n\in\N}$ be a  sequence in $\D^d$, and let, for all $n$, $S_n$ be the normalized Szegö kernel at $z_n$. The measure $\mu_Z:=\sum_nS_{z_n}(z_n)^{-1}\delta_{z_n}$ is a Carleson measure for $H^2_d$ if and only if the frame operator $T$ associated to the sequence $(S_n)_{n\in\N}\subset H^2_d$ is bounded, where
\[
T(f):=\sum_{n\in\N}\langle f, S_n\rangle~S_n=\left(\sum_{n\in\N}S_nS_n^*\right)(f),\qquad f\in H^2.
\]
Define, for all $a\leq b$ in $\N$,
\[
T_{[a, b]}:=\sum_{|m|=a}^b\sum_{z_j\in A_m}S_jS_j^*,
\]

as the frame operator of the  kernel functions associated to points in the annuli $\{A_m\,|\, a\leq |m|\leq b\}$.  By writing each $S_j$ in their coordinates with respect to the monomials basis  of $H^2_d$
\[
S_j(z)=\left(\prod_{i=1}^d\sqrt{1-|z^i_j|^2}\right)\sum_{l\in\N^d}\overline{z_j}^lz^l\qquad z\in\D^d,
\]
we will be able to use some results on the highest eigenvalue of random matrices that can be written as the sum of rank-one independent components. Since such components must be \emph{finite} dimensional square matrices (see Theorem \ref{thm:cheroff} below), we need first to approximate the vectors $(S_j)_{j\in\N}$ using partial sums. Fix a $L$ in $\N$, and let 
\[
T_{[a, b]}^L:=\sum_{|m|=a}^b\sum_{z_j\in A_m}P_L(S_j)(P_L(S_j))^*
\]
be the frame operator of the collection $\left\{P_L(S_j)\,\bigg|\, z_j\in\overset{b}{\underset{|m|=a}{\bigcup}} A_m\right\}$, where 
\[
P_L(S_j)(z)=\left(\prod_{i=1}^d\sqrt{1-|z^i_j|^2}\right)\sum_{l_1, \dots, l_d\leq L}\overline{z_j}^lz^l\qquad z\in\D^d.
\]
Fixed $a$ and $b$ and set $N_{[a, b]}:=\sum_{|m|=a}^b N_m$. We seek how large must $L$ be for $\|T_{[a, b]}\|$ and $\|T_{[a, b]}^L\|$ to be close.

\begin{lem}
\label{lem:partialsums}
For all $a<b$ in $\N$ and for any $L$ in $\N$
\begin{equation}
\label{eqn:gramapprox}
\|T_{[a, b]}\|\leq \|T^L_{[a, b]}\|+ C N_{[a, b]}(1-2^{-b})^{2L}.
\end{equation}
\end{lem}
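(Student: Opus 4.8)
The plan is to compare the two positive operators \emph{directly at the level of their quadratic forms}, rather than through the norm of the difference $T_{[a,b]}-T^L_{[a,b]}$. The latter route is tempting but lossy: the difference of the rank-one summands $S_jS_j^*-P_L(S_j)(P_L(S_j))^*$ has norm only of order $\|w_j\|$, where $w_j:=S_j-P_L(S_j)$, so $\|T_{[a,b]}\|\le\|T^L_{[a,b]}\|+\|T_{[a,b]}-T^L_{[a,b]}\|$ would produce an error \emph{linear} in the tails and hence a decay of the wrong order. The correct quadratic decay $(1-2^{-b})^{2L}$ comes from the fact that the top eigenvalues $\|T_{[a,b]}\|$ and $\|T^L_{[a,b]}\|$ are much closer than the operators themselves. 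The clean way to see this is to notice that, writing $Q_L:=I-P_L$, one has $T^L_{[a,b]}=P_L\,T_{[a,b]}\,P_L$, which reduces the lemma to an abstract inequality about a positive operator and a pair of complementary orthogonal projections.

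Concretely, I would first establish that for any bounded positive operator $T$ on a Hilbert space and any orthogonal projection $P$ with complement $Q=I-P$,
\[
\|T\|\le \|PTP\|+\|QTQ\|.
\]
To prove this, fix a unit vector $f$ and split $f=g+h$ with $g=Pf$, $h=Qf$, so $\|g\|^2+\|h\|^2=1$. Expanding $\langle Tf,f\rangle=\langle Tg,g\rangle+2\,\mathrm{Re}\,\langle Tg,h\rangle+\langle Th,h\rangle$, I bound the diagonal terms by $\langle Tg,g\rangle=\langle PTPg,g\rangle\le\|PTP\|\,\|g\|^2$ and $\langle Th,h\rangle\le\|QTQ\|\,\|h\|^2$, and the cross term by the Cauchy--Schwarz inequality for the positive semidefinite form $(x,y)\mapsto\langle Tx,y\rangle$, which gives $|\langle Tg,h\rangle|\le\|PTP\|^{1/2}\|QTQ\|^{1/2}\|g\|\,\|h\|$. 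Collecting terms yields $\langle Tf,f\rangle\le\bigl(\|PTP\|^{1/2}\|g\|+\|QTQ\|^{1/2}\|h\|\bigr)^2$, and a \emph{second} application of Cauchy--Schwarz to the vectors $(\|PTP\|^{1/2},\|QTQ\|^{1/2})$ and $(\|g\|,\|h\|)$ bounds the right-hand side by $\|PTP\|+\|QTQ\|$. Taking the supremum over $f$ proves the claim; note it is sharp, since for a single normalized kernel it returns the exact identity $\|T\|=1$.

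Applying this with $P=P_L$ and $T=T_{[a,b]}$ reduces everything to estimating $\|Q_LT_{[a,b]}Q_L\|$. Since
\[
Q_LT_{[a,b]}Q_L=\sum_{j}(Q_LS_j)(Q_LS_j)^*=\sum_{j}w_jw_j^*,
\]
subadditivity of the operator norm on the rank-one pieces gives $\|Q_LT_{[a,b]}Q_L\|\le\sum_j\|w_j\|^2$, the sum running over the $N_{[a,b]}$ indices with $z_j\in A_m$, $a\le|m|\le b$. It then remains to estimate each tail. Using the monomial expansion of $S_j$ and the orthonormality of the monomials in $H^2_d$, a direct computation gives $\|P_L(S_j)\|^2=\prod_{i=1}^d\bigl(1-|z_j^i|^{2(L+1)}\bigr)$, so that
\[
\|w_j\|^2=1-\prod_{i=1}^d\bigl(1-|z_j^i|^{2(L+1)}\bigr)\le\sum_{i=1}^d|z_j^i|^{2(L+1)}.
\]
Because $z_j\in A_m$ with $|m|\le b$, every coordinate satisfies $1-|z_j^i|\ge 2^{-(m_i+1)}\gtrsim 2^{-b}$, whence $|z_j^i|^{2(L+1)}\lesssim (1-2^{-b})^{2L}$; summing over $i$ and over the $N_{[a,b]}$ points gives $\sum_j\|w_j\|^2\le C\,N_{[a,b]}(1-2^{-b})^{2L}$, which together with the previous two displays is exactly \eqref{eqn:gramapprox}.

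The only genuinely delicate point is the first step. The abstract ``diagonal'' comparison $\|T\|\le\|PTP\|+\|QTQ\|$, proved via the coupled Cauchy--Schwarz estimates above, is precisely what upgrades the naive linear tail bound to the quadratic decay $(1-2^{-b})^{2L}$; keeping the norms $\|g\|,\|h\|$ coupled through $\|g\|^2+\|h\|^2=1$ (rather than bounding each by $1$) is the crux. Everything after that—the identity $T^L_{[a,b]}=P_LT_{[a,b]}P_L$, the rank-one subadditivity, and the explicit geometric-series computation of $\|w_j\|^2$—is elementary.
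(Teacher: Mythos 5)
Your proof is correct, and structurally it follows the same path as the paper's: split each $S_j$ into $P_L S_j$ and the tail $Q_L S_j$, reduce the lemma to bounding the tail frame operator, bound that trivially by the number of points times $\sup_j \|Q_L S_j\|^2$, and compute the tails from the monomial expansion (your $\|P_L S_j\|^2 = \prod_{i=1}^d (1-|z_j^i|^{2(L+1)})$ is in fact the precise version of the paper's formula). The one genuine divergence is how the reduction is justified. The paper asserts, ``thanks to orthogonality,'' the additive splitting $T_{[a,b]} = T^L_{[a,b]} + \tilde T^L_{[a,b]}$; read literally at the frame-operator level this identity is false, since the cross terms $P_L S_j (Q_L S_j)^* + Q_L S_j (P_L S_j)^*$ do not vanish, but it is exact at the Gram-matrix level, because $\langle S_n, S_j\rangle = \langle P_L S_n, P_L S_j\rangle + \langle Q_L S_n, Q_L S_j\rangle$ entrywise and $\|G_V\| = \|T_V\|$; the triangle inequality then gives the same conclusion with the same quadratic tail decay. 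Your abstract inequality $\|T\| \le \|PTP\| + \|QTQ\|$ for positive $T$, combined with the identification $T^L_{[a,b]} = P_L T_{[a,b]} P_L$, is exactly the operator-side mirror of that entrywise cancellation, and it is a valid, self-contained substitute — arguably a repair of the paper's one-line claim as written. Consequently, your opening remark that the ``difference'' route is lossy applies only to the naive term-by-term comparison of the rank-one summands, not to the paper's Gramian identity, which also produces the $(1-2^{-b})^{2L}$ decay directly. One blemish you share with the paper: points of $A_m$ satisfy only $|z_j^i| \le 1 - 2^{-(m_i+1)}$ (the paper's line $|z_j^i| \le 1-2^{-m_i}$ is a slip, as points of $A_m$ in fact satisfy $|z_j^i| > 1-2^{-m_i}$), so both arguments strictly yield $C N_{[a,b]}(1-2^{-b-1})^{2L}$ rather than $C N_{[a,b]}(1-2^{-b})^{2L}$, and the two are not comparable uniformly in $L$; this off-by-one is harmless where the lemma is applied, since there $L$ is taken as large as $2^{3b}$ and one may simply replace $b$ by $b+1$ in the statement.
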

\begin{proof}
Thanks to orthogonality,
\[
T_{[a, b]}=T_{[a, b]}^L+\tilde{T}_{[a, b]}^L,
\]
where $\tilde{T}^L_{[a, b]}$ is the frame operator of the collection $\left\{(Id-P_L)(S_j)\,|\, z_j\in\bigcup_{|m|=a}^b A_m\right\}$. Thus $\|T_{[a, b]}\|$ and $\|T_{[a, b]}^L\|$ differs at most by $\|\tilde{T}^L_{[a, b]}\|$. Notice that for all $f$ in $H^2$
\begin{align*}
 & \left\|\sum_{|m|=a}^b \sum_{z_j\in A_m} \langle{f, (Id-P_L)(S_j)}\rangle~(Id-P_L)(S_j)\right\| \\
& \leq N_{[a, b]}~\|f\|~\sup\left\{\|(Id-P_L)(S_j)\|^2\,\bigg|\, z_j\in\bigcup_{|m|=a}^b A_m \right\}\\
& = N_{[a, b]}~\|f\|~\sup\left\{1-\prod_{i=1}^d(1-|z^i_j|^{2L})\,\bigg|\, z_j\in\bigcup_{|m|=a}^b A_m \right\}\\
& \leq N_{[a, b]}~\|f\|~\left(
1-\left(1-\left(1-2^{-b}\right)^{2L}\right)^d\right) \\
& \lesssim_d N_{[a, b]}~\|f\|~(1-2^{-b})^{2L}.
\end{align*}

since for all $z_j\in A_m$, $|m|=a, \dots, b$, one has that 
\[
|z_j^i|\leq1-2^{-m_i}\leq1-2^{-|m|}\leq 1-2^{-b},\qquad i=1, \dots d.
\]

\end{proof}

\subsection{Random Szegö Gramians}

We are now ready for the proof of Theorem \ref{thm:Carlesonpolydisc}. Thanks to Corollary \ref{cor:stein:separation}, if $N_m \not\lesssim 2^{(1-\varepsilon)|m|}$ for every $\varepsilon >0$, then $\Lambda$ is almost surely not the union of finitely many separated sequences. Hence, \cite[Proposition 9.11]{AM02}, $\Lambda$ is not a Carleson sequence almost surely.\\
Let $\Lambda$ be a random sequence in $\D^d$, and assume that $N_m\lesssim 2^{(1-\varepsilon)|m|}$, for some $\varepsilon>0$. The idea is to arrange a (deterministic) decomposition of the associated random Gramian $G_\Lambda$, by writing it as the sum of a sequence of overlapping blocks and the remaining off-blocks part. More precisely, set 
\[
I_j:=[2^{j-1}, 2^j/\varepsilon]\cap\N,\qquad j\in\N.
\]
Let $G_\Lambda=G_1+G_2$, where the entries of $G_1$ coincide with the ones of $G$ on the overlapping blocks $(X_j)_{j\in\N}$, $X_j$ being the Gram matrix of the collection
\[
\{S_n\,|\, \lambda_n\in A_m, |m|\in I_j\},
\]
and they are zero elsewhere. $G_2$ instead is zero on the overlapping blocks and carries the entries of $G_\Lambda$ outside of such blocks. We prove Theorem \ref{thm:Carlesonpolydisc} by showing in two separate steps that $G_1$ and $G_2$ are bounded almost surely:\\

\paragraph{\bf{Diagonal overlapping blocks estimates}}
The key result that we are going to use from the theory of random matrices can be extracted from the matrix Chernoff's inequality, \cite[Theorem 1.1]{Tropp2012}:
\begin{thm}
\label{thm:cheroff}
Let $T$ be the frame operator of finitely many random independent vectors $v_1, \dots, v_N$ in $\C^L$, and let $\mu:=\|\E(T)\|$. If $\|v_j\|\le1$ for all $j$ almost surely, then
\[
\bP\left(\left\|T\right\|\geq (1+\delta)\mu\right)\leq L~\left(\frac{e}{1+\delta}\right)^{\delta\mu}\qquad\delta\geq0.
\]
\end{thm}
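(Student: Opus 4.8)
The plan is to recognize that $T$ is precisely a sum of independent, positive semidefinite, rank-one random matrices and then invoke the matrix Chernoff inequality directly. Writing $T=\sum_{j=1}^{N}v_jv_j^*$, each summand $v_jv_j^*$ is a self-adjoint positive semidefinite operator on $\C^L$ whose only nonzero eigenvalue is $\|v_j\|^2$. The almost-sure hypothesis $\|v_j\|\le 1$ therefore gives $\lambda_{\max}(v_jv_j^*)=\|v_j\|^2\le 1$ almost surely, so the uniform eigenvalue bound required by \cite[Theorem 1.1]{Tropp2012} holds with the parameter $R=1$.

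Next I would apply Tropp's matrix Chernoff bound to the sequence $\{v_jv_j^*\}_{j=1}^{N}$. Since both $T$ and $\E(T)$ are positive semidefinite, one has $\|T\|=\lambda_{\max}(T)$ and $\mu=\|\E(T)\|=\lambda_{\max}(\E(T))$, so $\mu$ plays exactly the role of $\mu_{\max}$ in the cited statement. The theorem then yields, for every $\delta\ge 0$,
\[
\bP\bigl(\|T\|\ge (1+\delta)\mu\bigr)\le L\left[\frac{e^{\delta}}{(1+\delta)^{1+\delta}}\right]^{\mu}.
\]

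Finally I would reduce this to the stated cleaner form by an elementary estimate. Since $1+\delta\ge 1$, we have $(1+\delta)^{1+\delta}=(1+\delta)\,(1+\delta)^{\delta}\ge (1+\delta)^{\delta}$, whence
\[
\frac{e^{\delta}}{(1+\delta)^{1+\delta}}\le \frac{e^{\delta}}{(1+\delta)^{\delta}}=\left(\frac{e}{1+\delta}\right)^{\delta}.
\]
As $\mu\ge 0$, raising to the power $\mu$ preserves the inequality, so the Chernoff bound above is dominated by $L\bigl(\tfrac{e}{1+\delta}\bigr)^{\delta\mu}$, which is exactly the claim.

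There is no genuine obstacle here: the entire content lies in matching the hypotheses of the cited random-matrix result and in the final one-line simplification. The only points demanding a little care are confirming that the uniform spectral bound is governed by $\|v_j\|^2$ rather than $\|v_j\|$ (harmless, since $\|v_j\|\le 1$ forces $\|v_j\|^2\le 1$ as well), and checking that the passage from the sharp Chernoff exponent to the looser stated one is valid for all $\delta\ge 0$, including the trivial case $\delta=0$ where both sides reduce to the bound $\bP(\|T\|\ge\mu)\le L$.
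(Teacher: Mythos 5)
Your proposal is correct and takes essentially the same approach as the paper: the paper gives no independent argument for Theorem \ref{thm:cheroff}, stating only that it ``can be extracted from the matrix Chernoff's inequality, \cite[Theorem 1.1]{Tropp2012}'', and your write-up carries out precisely that extraction. The details you supply --- the identification $T=\sum_j v_jv_j^*$ with $\lambda_{\max}(v_jv_j^*)=\|v_j\|^2\le 1$ (so the uniform bound $R=1$ holds), the matching of $\mu$ with $\mu_{\max}$, and the elementary estimate $e^{\delta}/(1+\delta)^{1+\delta}\le\bigl(e/(1+\delta)\bigr)^{\delta}$ valid for all $\delta\ge 0$ --- are exactly the steps implicit in the paper's citation.
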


A computation shows that by our choice of the sequence $(I_j)_{j}$, the number of overlaps of the blocks that compose $G_1$ is uniformly bounded by $M_\varepsilon:=\log_2(1/\varepsilon)+1$. Therefore, thanks to  Lemma \ref{lem:overlappingblocks} in order to show that $G_1$ is bounded almost surely it suffices to show that
\begin{equation}
\label{eqn:supX_j}
\sup_j\|X_j\|<\infty
\end{equation}
almost surely. Notice that $X_j$ is the Gram matrix associated to the frame operator $T_{[a_j, b_j]}$, where $a_j=2^{j-1}$ and $b_j=2^j/\varepsilon$. Consider $T_{[a_j, b_j]}^{L_j}$, where $L_j=2^{3b_j}$ so that

\begin{equation}
\label{eqn:choiceofL_m}
N_{[a_j, b_j]}(1-2^{-b_j})^{2L_j}\lesssim_\varepsilon 2^{2(1-\varepsilon)b_j}(1-2^{-b_j})^{2^{3b_j}}\underset{b_j\to\infty}{\to}0.
 \end{equation}
 
Thanks to Lemma \ref{lem:partialsums},  $\|X_j\|$ and $\|T^{L_j}_{[a_j, b_j]}\|$ are closer than an uniform constant. We have, with respect to the coordinates given by the monomials in $H^2_d$, 
\[
\begin{split}
T^{L_j}_{[a_j, b_j]}&=\sum_{|m|=a_j}^{b_j}\sum_{\lambda_n\in A_m}P_{L_j}(S_n)(P_{L_j}(S_n))^*\\
&=\sum_{|m|=a_j}^{b_j}\sum_{\lambda_n\in A_m}\left(\prod_{i=1}^d(1-(r^i_n)^2\right)\left(r_n^{k+l}~e^{-i\theta_n(k-l)}\right)_{|l|,|k|=0}^{L_j},
\end{split}
\]
where
\[
r_n^{k+l}~e^{-i\theta_n(k-l)}=\prod_{i=1}^d(r^i_n)^{k_i+l_i}e^{-i\theta^i_n(k_i-l_i)}.
\]

Hence the expectation of $T^{L_j}_{[a_j, b_j]}$ is diagonal, and its norm is 
\[
\mu_j:=\left\|\E\left(T^{L_j}_{[a_j, b_j]}\right)\right\|\simeq \sum_{|m|=a_j}^{b_j}N_m2^{-|m|}\lesssim\sum_{|m|=a_j}^{b_j}2^{-\varepsilon |m|}\lesssim 2^{-\frac{\varepsilon a_j}{2}}.
\]

  Fix a positive number $A$, to be determined later. By applying Theorem \ref{thm:cheroff}, $\delta_j:= \frac{A}{\mu_j}-1$, we obtain
\[
\begin{split}
\bP\left(\left\|T_{[a_j, b_j]}^{L_j}\right\|\geq A\right)&\leq L_j^d\left(\frac{\mu_j~e}{A}\right)^{A-\mu_j}\\
&\underset{j\to\infty}{\sim} L_j^d\left(\frac{\mu_j~e}{A}\right)^{A}\\
&\lesssim_A 2^{3db_j- \frac{A\varepsilon a_j}{2}}\\
&= 2^{a_j\left(\frac{6d}{\varepsilon}-\frac{A\varepsilon}{2}\right)},
\end{split}
\]
since $b_j=2a_j/\varepsilon$. It suffices then to pick $A>\frac{12d}{\varepsilon^2}$, and Borel-Cantelli Lemma gives \eqref{eqn:supX_j}.\\

\paragraph{\bf{Off diagonal estimates}}
We are left with showing that $G_2$ is bounded almost surely, under the assumption that $N_m\lesssim 2^{(1-\varepsilon)|m|}$. The advantage of taking overlapping blocks in the first step of our proof is that the entries that are left composing $G_2$ are far away from the diagonal, hence we can exploit the decay of their expectation. We show that $\E(\|G_2\|_{HS})$, the expectation of the Hilbert-Schmidt norm of $G_2$, is finite, concluding the proof of Theorem \ref{thm:Carlesonpolydisc}. Thanks to \cite[Remark 3.2]{Dayan}, if $\lambda_{n}$ is in $A_{m}$ and $\lambda_{l}$ is in $A_{k}$, then
\begin{equation}
\label{eqn:expentry}
\E(|\left\langle S_{\lambda_{n}}, S_{\lambda_{l}}\right\rangle|^2)=\frac{(1-r^2_n)(1-r^2_j)}{1-r_nr_j}\simeq\prod_{i=1}^d\frac{1}{2^{m_i}+2^{k_i}}.
\end{equation}
Thus the expectation of the square of the Hilbert-Schmidt norm of the off-diagonal block 
\[
\left(\langle S_n, S_l\rangle\right)_{\lambda_n\in A_m, \lambda_l\in A_k}
\]
of $G_\Lambda$ is controlled by $\frac{N_mN_l}{\prod_i2^{m_i}+2^{l_i}}$. Hence 
\[
\begin{split}
\E(\|G_2\|_{HS}^2)\simeq&\sum_{j=1}^\infty\sum_{|m|=2^{j-1}}^{2^j}\sum_{|l|\ge2^j/\varepsilon}N_mN_l\prod_{i=1}^d\frac{1}{2^{m_i}+2^{l_i}}\\
\lesssim &\sum_{j=1}^\infty\sum_{|m|=2^{j-1}}^{2^j}\sum_{|l|\ge2^j/\varepsilon}\frac{2^{(1-\varepsilon)(|m|+|l|)}}{2^{|m|}+2^{|l|}}\\
\leq&\sum_{j=1}^\infty\sum_{|m|=2^{j-1}}^{2^j}2^{(1-\varepsilon)|m|}\sum_{|l|\ge2^j/\varepsilon}2^{-\varepsilon |l|}\\
\lesssim&\sum_{j=1}^\infty\sum_{|m|=2^{j-1}}^{2^j}2^{(1-\varepsilon)|m|}\sum_{s\ge2^j/\varepsilon}s^{d-1}2^{-\varepsilon s}\\
\lesssim&\sum_{j=1}^\infty\sum_{|m|=2^{j-1}}^{2^j}2^{(1-\varepsilon)|m|-2^{j}(1-\varepsilon/2)}\\
\lesssim&\sum_{j=1}^\infty\sum_{r=2^{j-1}}^{2^j}r^{d-1}2^{(1-\varepsilon)r-2^{j}(1-\varepsilon/2)}\\
\leq&\sum_{j=1}^\infty2^{dj-\varepsilon2^{j-1}}<\infty.
\end{split}
\]

This concludes the proof of Theorem \ref{thm:Carlesonpolydisc}.
\subsection{Random Dirichlet Gramians}
\label{sec:Dirichlet}
Given a random sequence $\Lambda$ in the polydisc, one can ask whether it generates a Carleson measure for a reproducing kernel Hilbert space other than the Hardy space. As in the one variable setting, for all $0\le a\le1$, let $D^a_d$ be the associated Dirichlet-type space on $\D^d$, that is, the reproducing kernel Hilbert space having kernel
\[
k^{(a)}_w(z):=\begin{cases}
\prod_{i=1}^d\frac{1}{(1-\overline{w^i}z^i)^{1-a}}\qquad&a\le0<1\\
\prod_{i=1}^d\frac{1}{z^i\overline{w^i}}\log\frac{1}{1-\overline{w^i}z^i}\qquad &a=1
\end{cases}
\]
Let $S^{(a)}_w(z):=k^{(a)}_w(z)/\left\|k^{(a)}_w\right\|$ denote the associated normalized kernel.
The case $a=0$ corresponds to the Hardy space, while the case $a=1$ corresponds to the Dirichlet space on the polydisc. 
In order to study random sequence on the polydisc that are Carleson for the spaces $D^2_d$, we first extend \eqref{eqn:expentry} to this setting:
\begin{lem}
Let $\Lambda=(\lambda_n)_n$ be a random sequence in the polydisc. Then      
\label{lemma:expentry}
\[
\E(|S^{(a)}(\lambda_n, \lambda_j)|^{2})\simeq\prod_{i=1}^d (1-r^i_n)^{1-a}(1-r^i_j)^{1-a}\begin{cases}
\frac{1}{(1-r^i_nr^i_j)^{1-2a}}\quad&0\le a<1/2\\
\log\frac{1}{1-r^i_nr^i_j}\quad&a=1/2\\
1\quad\quad&1/2<a\le1
\end{cases}.
\]
\end{lem}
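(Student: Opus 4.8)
The plan is to read $S^{(a)}(\lambda_n,\lambda_j)$ as the inner product of normalized reproducing kernels $\langle S^{(a)}_{\lambda_n},S^{(a)}_{\lambda_j}\rangle$, in analogy with \eqref{eqn:expentry}, and to separate the radial and angular contributions coordinate by coordinate. For $0\le a<1$ the reproducing property yields, after normalizing,
\[
\big|\langle S^{(a)}_{\lambda_n},S^{(a)}_{\lambda_j}\rangle\big|^{2}=\prod_{i=1}^{d}\frac{(1-(r^i_n)^2)^{1-a}(1-(r^i_j)^2)^{1-a}}{\big|1-r^i_nr^i_je^{i\phi^i}\big|^{2(1-a)}},\qquad \phi^i:=2\pi(\theta^i_j-\theta^i_n).
\]
Because the $\theta_n$ are i.i.d.\ and uniform on $\T^d$, for $n\ne j$ the phases $\phi^1,\dots,\phi^d$ are independent and uniform on $[0,2\pi)$; hence the expectation factors over $i$, and using $1-r^2\simeq 1-r$ the whole statement reduces to a single one-dimensional estimate for
\[
I_\beta(\rho):=\frac{1}{2\pi}\int_0^{2\pi}\frac{d\phi}{(1-2\rho\cos\phi+\rho^2)^{\beta}},\qquad \beta:=1-a,\ \rho:=r^i_nr^i_j .
\]
The three cases $a<1/2$, $a=1/2$, $a>1/2$ correspond to $\beta>1/2$, $\beta=1/2$, $\beta<1/2$, and I must show that $I_\beta(\rho)$ is comparable to $(1-\rho)^{1-2\beta}$, to $\log\frac{1}{1-\rho}$, and to $1$, respectively.

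To analyze $I_\beta$ I would use the elementary comparison $1-2\rho\cos\phi+\rho^2=(1-\rho)^2+2\rho(1-\cos\phi)\simeq(1-\rho)^2+\phi^2$, valid for $\phi\in[0,\pi]$ and $\rho$ bounded below, together with the symmetry $\phi\mapsto 2\pi-\phi$. Substituting $\phi=(1-\rho)u$ then gives
\[
I_\beta(\rho)\simeq(1-\rho)^{1-2\beta}\int_0^{\pi/(1-\rho)}\frac{du}{(1+u^2)^{\beta}} .
\]
The trichotomy is now read off from the behaviour of $\int_0^T(1+u^2)^{-\beta}\,du$ as $T\to\infty$: it converges for $\beta>1/2$, grows like $\log T$ for $\beta=1/2$, and grows like $T^{1-2\beta}$ for $\beta<1/2$. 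Inserting $T\simeq(1-\rho)^{-1}$ produces exactly the rates $(1-\rho)^{1-2\beta}$, $\log\frac{1}{1-\rho}$ and $1$, which translate back into the three claimed expressions. For $\rho$ bounded away from $1$ the integral $I_\beta$ is squeezed between two positive constants, matching the right-hand side in each regime (for $a=1/2$ this is the usual convention that the logarithmic comparison is meant for radii bounded away from $0$, the only range relevant to the applications).

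The Dirichlet endpoint $a=1$ is handled by a separate and shorter computation, since the kernel is logarithmic rather than a power. There the per-coordinate normalization is $\|k^{(1)}_{\lambda_n}\|^{2}=\prod_i (r^i_n)^{-2}\log\frac{1}{1-(r^i_n)^2}$, and expanding $\log\frac{1}{1-\rho e^{i\phi}}=\sum_{k\ge1}\rho^ke^{ik\phi}/k$ and integrating in $\phi$ annihilates the cross terms, leaving
\[
\frac{1}{2\pi}\int_0^{2\pi}\Big|\log\tfrac{1}{1-\rho e^{i\phi}}\Big|^{2}\,d\phi=\sum_{k\ge1}\frac{\rho^{2k}}{k^{2}},
\]
a quantity bounded by $\pi^2/6$ and comparable to a constant as $\rho\to1$; after the cancellation of the radial prefactors and division by the two logarithmic kernel norms one obtains the endpoint estimate. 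I expect the main obstacle to be precisely the uniform asymptotic analysis of $I_\beta(\rho)$ as $\rho\to1$ across the critical exponent $\beta=1/2$ — this single integral is where all three regimes are born — whereas the reduction to one variable via independence of the phases and the endpoint computation are routine.
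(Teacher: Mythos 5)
Your proof is correct for $0\le a<1$ and takes a genuinely different route from the paper's. After the common reduction to $d=1$ (independence of the coordinates of the $\theta_n$, expectation of a product of independent factors), the paper never touches the closed form of the kernel: it expands $k^{(a)}_w(z)=\sum_l c_l(z\overline{w})^l$ with $c_l\simeq(1+l)^{-a}$, uses $\E(e^{i(l-k)(\theta_n-\theta_j)})=\delta_{lk}$ to kill the off-diagonal terms, and so reduces the whole lemma to the elementary asymptotics of $\sum_l (1+l)^{-2a}\rho^{2l}$ as $\rho\to1$ --- in effect a Parseval computation, which moreover treats all $a\in[0,1]$ at once, since $a=1$ simply corresponds to $c_l=1/(l+1)$. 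You instead estimate the angular integral $I_\beta(\rho)=\frac{1}{2\pi}\int_0^{2\pi}|1-\rho e^{i\phi}|^{-2\beta}\,d\phi$ directly via $|1-\rho e^{i\phi}|^2=(1-\rho)^2+2\rho(1-\cos\phi)\simeq(1-\rho)^2+\phi^2$ and the substitution $\phi=(1-\rho)u$; this is the classical real-variable estimate and it is sound. In fact the comparison holds uniformly for all $0\le\rho<1$ and $|\phi|\le\pi$, so your restriction to $\rho$ bounded below is unnecessary; and your worry about uniform asymptotics across the critical exponent $\beta=1/2$ is moot, since $a$ is fixed in the lemma and the implied constants may depend on it. What the paper's route buys is a two-line argument uniform in $a$ including the endpoint; what yours buys is an explicit picture of where the trichotomy is born, namely the integrability of $(1+u^2)^{-\beta}$ at infinity, with no need for coefficient asymptotics.

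One point at $a=1$ deserves flagging. Your Parseval computation there is correct, but after dividing by the two kernel norms it yields, per coordinate, $\E|S^{(1)}(\lambda_n,\lambda_j)|^2\simeq\left(\log\frac{e}{1-r_n^2}\right)^{-1}\left(\log\frac{e}{1-r_j^2}\right)^{-1}$, \emph{not} the constant $1$ that the displayed formula of the lemma gives at $a=1$ (where $(1-r)^{1-a}\equiv1$). This is a slip in the paper rather than in your argument: the paper's first $\simeq$ uses $\|k^{(a)}_r\|^{-2}\simeq(1-r)^{1-a}$, which fails at $a=1$ because of the logarithm in the kernel norm, and the form actually invoked later in Corollary \ref{coro:CarlesonDa} is $\E|S^{(a)}(\lambda_n,\lambda_j)|^2\simeq\|k^{(a)}_{r_n}\|^{-2}\|k^{(a)}_{r_j}\|^{-2}$ --- which is exactly what your endpoint computation delivers. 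So state your $a=1$ conclusion in that normalized form rather than asserting the lemma's displayed right-hand side; and your observation that the $a=1/2$ logarithmic comparison requires $r_nr_j$ bounded away from $0$ (equivalently, writing $\log\frac{e}{1-\rho}$ in place of $\log\frac{1}{1-\rho}$) is accurate, the paper's statement sharing the same harmless imprecision.
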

\begin{proof}
First, observe that it is enough to prove the Lemma for the case $d=1$, since the $d$ coordinates of each random variable $\theta_n$ are independent, and the expectation of the product of independent random variables factorizes. Write then 
\[
k^{(a)}_w(z)=\sum_{l=0}^\infty c_l (z\overline{w})^l\qquad z, w\in\D
\]
where $c_l\simeq (1+l)^{-a}$. Hence
\[
\begin{split}
&\E(|S^{(a)}(\lambda_n, \lambda_j)|^{2})\\
\simeq&(1-r_n)^{1-a}(1-r_j)^{1-a}\sum_{l, r=0}^\infty c_lc_r (r_nr_j)^{l+r}\E(e^{i(l-r)(\theta_n-\theta_j)})\\
=&(1-r_n)^{1-a}(1-r_j)^{1-a}\sum_{l=0}^\infty c_l^2(r_nr_j)^{2l},
\end{split}
\]
and the Lemma follows.
\end{proof}
Since all the kernels involved are invariant under rotations, $\Lambda$ generates for all $\omega$ in $\Omega$ a finite measure for $D^a_d$ if and only if
\begin{equation}
\label{eqn:CarlesonDa}
\sum_{n \in \bN}\|k^{(a)}_{r_n}\|^{-2}=\sum_{m\in\N^d}N_m2^{-(1-a)|m|}<\infty.
\end{equation}
Recall that for $d=1$ and $0<a\le1$, \eqref{eqn:CarlesonDa} is also sufficient for $\Lambda$ to generate a Carleson measure for $D^a$, see \cite[Theorem 1.4]{chalmoukis22}. Thanks to Lemma \ref{lemma:expentry}, this can be seen to be true also in the multi-variable case, though our argument covers only the case $1/2<a\le 1$. Indeed, if \eqref{eqn:CarlesonDa} holds,thanks to Lemma \ref{lemma:expentry}, $1/2<a\le1$, one obtains that
\[
\begin{split}
&\E\left(\sum_{n\ne j}|S^{(a)}(\lambda_n, \lambda_j)|^2\right)=\sum_{n\ne j}\E\left(|S^{(a)}(\lambda_n, \lambda_j)|^2\right)\\
\simeq&\sum_{n\ne j}\|k^{(a)}_{r_n}\|^{-2}\|k^{(a)}_{r_j}\|^{-2}<\infty,
\end{split}
\]
hence the Gram matrix of $\Lambda$ in $D^a_d$
\[
G_\Lambda^a:=\left(S^{(a)}(\lambda_n, \lambda_j)\right)_{n, j}
\]
is almost surely a Hilbert-Schmidt perturbation of the identity. In particular, 
\begin{cor}
\label{coro:CarlesonDa}
Let $d\ge 1$, and let $\Lambda$ be a random sequence in $\D^d$. For all $1/2<a\le1$, 

\begin{equation}
\label{eqn:carleson_dirichlet_poly}
\p(\Lambda\,\text{is a Carleson sequence for }\,D^a_d)=\begin{cases}
1\qquad&\text{if}\,\, \sum_{m\in\N^d}N_m2^{-(1-a)|m|}<\infty\\
0\qquad&\text{if}\,\,\sum_{m\in\N^d}N_m2^{-(1-a)|m|}=\infty
\end{cases}
\end{equation}

\end{cor}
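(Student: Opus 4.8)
The plan is to prove the two halves by quite different means: the convergent case by the Hilbert--Schmidt estimate already assembled in the discussion preceding the statement, and the divergent case by a deterministic test against the constant function.

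For the convergent case $\sum_{m\in\N^d}N_m2^{-(1-a)|m|}<\infty$, I would package the computation carried out just above the corollary. Since $1/2<a\le1$, the local factor in Lemma \ref{lemma:expentry} equals $1$, so $\E(|S^{(a)}(\lambda_n,\lambda_j)|^2)\simeq\|k^{(a)}_{r_n}\|^{-2}\|k^{(a)}_{r_j}\|^{-2}$; summing over $n\ne j$ and recalling from \eqref{eqn:CarlesonDa} that $\sum_n\|k^{(a)}_{r_n}\|^{-2}\simeq\sum_{m}N_m2^{-(1-a)|m|}<\infty$ yields $\E(\|G_\Lambda^a-\mathrm{Id}\|_{HS}^2)<\infty$, where $\mathrm{Id}$ appears because the normalized kernels put $1$'s on the diagonal of $G_\Lambda^a$. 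A nonnegative random variable with finite expectation is almost surely finite, so $G_\Lambda^a-\mathrm{Id}$ is almost surely Hilbert--Schmidt, hence $G_\Lambda^a$ is almost surely a bounded (indeed compact) perturbation of the identity and $\Lambda$ is Carleson almost surely. No zero--one law is needed here.

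For the divergent case $\sum_{m\in\N^d}N_m2^{-(1-a)|m|}=\infty$, I would note that this sum equals, by \eqref{eqn:CarlesonDa}, the total mass $\mu_\Lambda(\D^d)=\sum_n\|k^{(a)}_{r_n}\|^{-2}$; since $\|k^{(a)}_{r_n}\|$ depends only on the deterministic radii, this mass is deterministically infinite, for every outcome $\omega$. The constant function $1$ lies in $D^a_d$ for all $0\le a\le1$ --- it is the reproducing kernel at the origin, $k^{(a)}_0\equiv1$, with $\|1\|_{D^a_d}^2=1$ --- so testing the Carleson embedding \eqref{eqn:carlesonrkhs} against $f=1$ forces $\mu_\Lambda(\D^d)=\int_{\D^d}|1|^2\,d\mu_\Lambda\le C_\mu$. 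An infinite-mass $\mu_\Lambda$ thus cannot be Carleson for any $\omega$, and the probability is $0$.

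The restriction $1/2<a\le1$ is not an obstacle so much as the precise point of the statement: it is exactly the range in which the local factor in Lemma \ref{lemma:expentry} stays bounded, so that the expected Hilbert--Schmidt norm factorizes and is controlled by the square of the total mass. For $a\le 1/2$ that factor is an unbounded power of $(1-r_n^ir_j^i)^{-1}$ (or a logarithm), and finiteness of the mass no longer suffices to bound the off-diagonal sum, which is why only this range is claimed in the convergent half; the divergent half, being deterministic, is valid for every $a$.
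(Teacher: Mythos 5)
Your proposal is correct and follows essentially the same route as the paper: the convergent half is exactly the paper's argument (Lemma \ref{lemma:expentry} with the local factor bounded for $1/2<a\le1$, factorization of the expected off-diagonal Hilbert--Schmidt norm via \eqref{eqn:CarlesonDa}, and the conclusion that $G^a_\Lambda$ is almost surely a Hilbert--Schmidt perturbation of the identity), while the divergent half is the deterministic finite-mass necessity that the paper leaves implicit in its remark that rotation invariance makes $\mu_\Lambda(\D^d)$ deterministic. Your explicit test against $f=1=k^{(a)}_0$ merely spells out that implicit step, so no new ideas are involved.
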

We conjecture that Corollary \ref{coro:CarlesonDa} holds also for $0<a\le1/2$. The random matrix argument used in Theorem \ref{thm:Carlesonpolydisc} for the case $a=0$ can't be used to prove \eqref{eqn:carleson_dirichlet_poly} for $0<a\le1/2$, since it requires that the sequence $(N_m2^{-|m|})_{m\in\N^d}$ decay exponentially. A geometric sufficient condition for Carleson measures for $D^a_d$ in the deterministic setting is only available for $d=2, 3$: see \cite{Mozolyako2022} for the case $0<a<1$ and \cite{Arcozzi2023} for the case $a=1$.

\section{Random Carleson Measures on the Unit Ball}
\label{sec:ball}
The proof of Theorem \ref{thm:ball} relies on some properties of positive semi-definite matrices. An infinite matrix $A=(a_{nj})_{n, j}$, is positive semi-definite, $A\ge 0$, if for any $N > 0$ and $c_1, \dots, c_N$ in $\C$
\[
\sum_{n, j=1}^Nc_n\overline{c_j} a_{nj}\ge 0.
\]

For instance, any Gram matrix associated to a sequence of vectors $(v_n)_n$ in a Hilbert space is positive semi-definite. A noteworthy result about positive semi-definite matrices is that if $A=(a_{nj})_{n, j}$ and $B=(b_{nj})_{n, j}$ are positive semi-definite, then  $A\odot B:=(a_{nj}b_{nj})_{n, j}$ is positive semi-definite as well. As a corollary, one proves the following:
\begin{lem}
\label{lemma:gram}
Let $A\colon\ell^2\to\ell^2$ be a bounded infinite matrix, and let $H$ be a positive semi-definite infinite matrix having all the entries on its main diagonal equal to $1$. Then $\|A\odot H\|\leq\|A\|$.
\end{lem}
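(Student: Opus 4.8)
The plan is to prove the (equivalent) statement that Schur multiplication by a positive semi-definite matrix with unit diagonal is a contraction on bounded operators, i.e.\ $\|A\odot H\|\leq\|A\|$. Since the operator norm of an infinite matrix may be tested on finitely supported vectors, and such a test involves only a finite top-left block of $A\odot H$, I would first reduce to finite matrices. Writing $P_N$ for the projection onto the first $N$ coordinates, one has $P_N(A\odot H)P_N=(P_NAP_N)\odot(P_NHP_N)$, with $\|P_NAP_N\|\leq\|A\|$ and with $P_NHP_N$ still positive semi-definite and still carrying $1$'s on the diagonal. Thus it suffices to prove $\|A\odot H\|\leq\|A\|$ for $N\times N$ matrices, after which the infinite case follows by letting $N\to\infty$.

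For the finite case the key device is a factorization of $H$ combined with an ampliation argument. Since $H\geq0$, I would write $H=B^*B$ and let $u_1,\dots,u_N\in\mathbb{C}^N$ be the columns of $B$, so that $h_{nj}=\langle u_j,u_n\rangle$; the hypothesis $h_{nn}=\|u_n\|^2=1$ then forces each $u_n$ to be a unit vector. Next I would ``decorate'' arbitrary test vectors $x,y$ by setting $\tilde x_j:=x_ju_j$ and $\tilde y_n:=y_nu_n$, viewed as elements of the $\mathbb{C}^N$-valued space $\ell^2(\{1,\dots,N\},\mathbb{C}^N)$. Because each $u_n$ has unit norm, the normalization is preserved: $\|\tilde x\|=\|x\|$ and $\|\tilde y\|=\|y\|$.

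The computation then collapses to
\[
\langle (A\otimes I)\tilde x,\tilde y\rangle
=\sum_{n,j}a_{nj}\langle x_ju_j,\,y_nu_n\rangle
=\sum_{n,j}a_{nj}\,x_j\overline{y_n}\,h_{nj}
=\langle (A\odot H)x,y\rangle,
\]
whence, using that ampliation by the identity preserves the operator norm,
\[
|\langle (A\odot H)x,y\rangle|
=|\langle (A\otimes I)\tilde x,\tilde y\rangle|
\leq \|A\otimes I\|\,\|\tilde x\|\,\|\tilde y\|
=\|A\|\,\|x\|\,\|y\|,
\]
and taking the supremum over unit $x,y$ gives the claim. The main point requiring care is purely bookkeeping: one must take the \emph{columns} of $B$ (rather than its rows), so that the factorization produces $h_{nj}=\langle u_j,u_n\rangle$ and the conjugations line up with the Schur product $A\odot H$ instead of with its transpose. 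The only external fact invoked is the elementary identity $\|A\otimes I\|=\|A\|$; neither ingredient is deep, and once $\tilde x,\tilde y$ are correctly normalized the estimate is immediate.
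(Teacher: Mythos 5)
Your proof is correct, but it follows a genuinely different route from the paper. The paper deduces the lemma in two lines from the Schur product theorem it has just recalled: the norm bound $\|A\|\leq C$ is encoded as positive semi-definiteness of $C^{2}\,Id-A$, and Schur multiplying this matrix by $H$ (using that $H\geq0$ has unit diagonal, so $Id\odot H=Id$) yields $C^{2}\,Id-A\odot H\geq0$. You instead give the classical factorization--ampliation argument: truncate to finite blocks, write $H=B^{*}B$ so that $h_{nj}=\langle u_j,u_n\rangle$ with unit column vectors $u_n$, and realize the sesquilinear form of $A\odot H$ as a compression of $A\otimes I$ via $x\mapsto(x_ju_j)_j$, which is isometric precisely because the diagonal of $H$ is $1$. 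Your bookkeeping is right (columns, not rows, so the conjugations match $A\odot H$ rather than its transpose), the reduction $P_N(A\odot H)P_N=(P_NAP_N)\odot(P_NHP_N)$ is valid, and the uniform truncation bound does imply boundedness of the infinite matrix. The trade-off: the paper's argument is shorter, but the positivity characterization $C^{2}\,Id-A\geq0$ of the norm it invokes really pertains to self-adjoint (indeed positive) $A$ --- harmless in the application, where $A$ is a Gram matrix, but not literally matching the lemma's hypothesis of an arbitrary bounded $A$. Your argument works verbatim for general bounded $A$, is self-contained (it reproves rather than cites the Schur product machinery), and in fact shows the stronger statement that such $H$ is a completely contractive Schur multiplier, since the same ampliation estimate passes to matrix-valued $A$.
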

\begin{proof}
The norm of $A$ is the least $C>0$ such that
\begin{equation}
    \label{eqn:normpsd}
C^2 Id-A\ge 0.
\end{equation}
By Schur multiplying the left hand side of \eqref{eqn:normpsd} by $H$, we obtain that $C^2Id-A\odot H\ge 0$, hence $\|A\odot H\|\leq C$.
\end{proof}
This, together with \cite[Theorem 4.3]{Dayan} and \cite[Theorem 3.4]{Massaneda96}, provides the proof of Theorem \ref{thm:ball}:

\begin{proof}[Proof of Theorem \ref{thm:ball}]
If $N_m \not \lesssim 2^{d(1-\varepsilon)m}$ for every $\varepsilon >0$, then thanks to \cite[Theorem 3.4]{Massaneda96} $\Lambda$ is not the union of finitely many separated sequences almost surely with respect to the pseudo-hyperbolic metric
\[
\rho(z, w)^2:=1-\frac{(1-|z|^2)(1-|w|^2)}{|1-\langle z, w\rangle_{\C^d}|^2}
\]
hence the Gram matrix
\[
G_\Lambda:=\left(\frac{(1-|z|^2)^\frac{d}{2}(1-|w|^2)^\frac{d}{2}}{\left(1-\langle z, w\rangle_{\C^d}\right)^d}\right)_{n, j}
\]
is not bounded almost surely, and $\Lambda$ does not generate a Carleson measure for the Hardy space. On the other hand, if there exists a positive $\varepsilon$ such that $N_m\lesssim 2^{d(1-\varepsilon)m}$, then $\Lambda$ generates a finite measure for $B^\nu_d$ for some $0<\nu$. Hence, \cite[Theorem 4.3]{Dayan}, $\Lambda$ is a Carleson sequence for $B^\nu_d$ almost surely, and the Gram matrix
\[
G_\Lambda:=\left(\frac{(1-|z|^2)^\frac{d-\nu}{2}(1-|w|^2)^\frac{d-\nu}{2}}{\left(1-\langle z, w\rangle_{\C^d}\right)^{d-\nu}}\right)_{n, j}
\]
is bounded almost surely. But since $G_\Lambda$ is the Schur product between $G_\Lambda^\nu$ and $G_\Lambda^{d-\nu}$ then $G_\Lambda$ is bounded almost surely thanks to Lemma \ref{lemma:gram}, hence $\Lambda$ is Carleson for the Hardy space almost surely.

\end{proof}

\section*{Acknowledgements} The authors would like to express their gratitude to Roland Speicher and Marwa Banna for directing us towards the literature on random matrices that contain Theorem \ref{thm:cheroff}. We would also like to thank the anonymous referee for the  careful reading of the manuscript.

\bibliographystyle{abbrv}
\bibliography{literature}

\end{document}